\newcommand{\E}{\mathsf{E}}
\newcommand{\F}{\mathbb{F}}
\newcommand{\Z}{\mathbb{Z}}
\newcommand{\ST}{Szemer\'edi-Trotter~}
\newtheorem{proposition}{Proposition}
\newtheorem*{proposition*}{Proposition}
\newtheorem{lemma}{Lemma}
\newtheorem*{corollary*}{Corollary 5}
\newtheorem{theorem}{Theorem}
\title{Attaining the exponent 5/4 for the sum-product problem in finite fields}
\date\today
\author[A.\ Mohammadi]{Ali Mohammadi}
\address{A.M.: School of Mathematics, Institute for Research in Fundamental Sciences (IPM),
Tehran, Iran.}
\email{a.mohammadi@ipm.ir}
 \author[S.\ Stevens]{Sophie Stevens}
\address{S.S.: Johannn Radon Institute for Computational and Applied Mathematics (RICAM), Linz, Austria}
\email{sophie.stevens@oeaw.ac.at}
\begin{document}
\maketitle

\begin{abstract}
    We improve the exponent in the finite field sum-product problem from $11/9$ to $5/4$, improving the results of Rudnev, Shakan and Shkredov \cite{RudShaShk}. That is, we show that if $A\subset \mathbb{F}_p$ has cardinality $|A|\ll p^{1/2}$ then 
    \[
    \max\{|A\pm A|,|AA|\} \gtrsim |A|^\frac54
    \]
    and
    \[
    \max\{|A\pm A|,|A/A|\}\gtrsim |A|^\frac54\,.
    \]
\end{abstract}

\section{Introduction}
Throughout the paper, we use $\F$ to denote an arbitrary field, $p$ a prime and $\F_p$ the finite field of order $p$. Given sets $A, B\subseteq \F$, we define their sum set by $A+B:= \{a+b: a\in A, b\in B\}$, and similarly define difference, product and ratio sets. In the sum-product problem over fields, we seek to establish that for any $0<\varepsilon<1$ and finite subset $A\subseteq \F$ (with appropriate conditions) we have 
\begin{equation}\label{eqn:SPC}
\max\{|AA|,|A+A|\}\gg |A|^{1+\varepsilon}.
\end{equation}
This naturally extends a question of Erd\H{o}s and Szemer\'edi~\cite{ES} over $\Z$.
Over finite fields, the first non-trivial result was achieved by Bourgain, Katz and Tao~\cite{BKT}, under the necessary condition that $|A| = o(|\mathbb{F}|)$; statements of the form~\eqref{eqn:SPC} can hold for subsets of finite fields only if the given set is small enough. Notably, by a construction of Garaev~\cite{Gar08}, for any $N\leq p$ there exists a subset $A \subseteq \F_p$ with $|A| = N$ such that
\begin{equation}
\label{eqn:SPUB}
\max\{|A+A|, |AA|\} \ll p^{1/2}N^{1/2}.
\end{equation}
Garaev~\cite{Gar08} also proved the lower-bound
\begin{equation}
\label{eqn:SpGarLS}
      \max\{|A+A|, |AA|\} \gg \min\{|A|^{2}p^{-1/2}, |A|^{1/2}p^{1/2}\},
 \end{equation}
which, as \eqref{eqn:SPUB} shows, is sharp up to constants in the range $|A|>p^{2/3}$. However, this bound is trivial in the range $|A|\leq p^{1/2}$. See also \cite[Theorem~5]{GraSol} for an improvement of \eqref{eqn:SpGarLS} in the range $p^{1/2}<|A|\leq p^{5/8}$. 

For sets of size less than $p^{1/2}$, Garaev \cite{Gar} first quantified the sum-product estimate explicitly, based on the method of Bourgain, Katz and Tao \cite{BKT}. By refining the same method, this estimate was improved incrementally in the series of papers (\cite{KaSh, BG,Li}), culminating in the apparent limit of this approach of $\varepsilon = 1/11 - o(1)$ by Rudnev~\cite{Rud12}.
Using different ideas based on an incidence result of Rudnev~\cite{Rud18}, Roche-Newton, Rudnev and Shkredov~\cite{RNRuShk} improved the exponent to the value $\varepsilon = 1/5$. A noteworthy feature of this result is that it holds for subsets of arbitrary fields $\F$, and under the constraint $|A|< p^{5/8}$ if char$(\F) = p>2$. 

In the reals, Elekes~\cite{El} instigated the use of tools from incidence geometry in the study of the sum-product problem, specifically a result of Szemer\'edi and Trotter~\cite{SzTr} on the number of incidences between points and lines over the real plane; Elekes proved that ~\eqref{eqn:SPC} holds with $\varepsilon = 1/4$ over the reals.
We match Elekes' bound in this paper, showing that 
it is actually the beautiful geometric ideas in Solymosi's argument \cite{Solymosi}, enabling $\varepsilon = 1/3$, that distinguish the improvements in the reals from those in finite fields. See also Solymosi \cite{Solymosi05}.
In the reals, the current best-known exponent $\varepsilon = 1/3 + 2/1167 - o(1)$ is attained by Rudnev and Stevens~\cite{RudSte}. It is worth pointing out that by applying the technique of Elekes using the best-known point-line incidences bound over fields of positive characteristic, due to Stevens and de~Zeeuw~\cite{SdZ}, one recovers $\varepsilon = 1/5$ as in \cite{RNRuShk}.

The exponent $\varepsilon = 1/5$ remained a threshold exponent until Shakan and Shkredov~\cite{ShaShk}, using techniques inherited from the reals, were able to break this barrier. In particular, whereas a breakthrough in progress in the reals came from the observation that bounds on $\E_3$ can be efficiently estimated using the \ST theorem (see e.g. \cite{SchShk}), Shakan and Shkredov~\cite{ShaShk} realised that the `correct' energy (with regards to the techniques currently available to us) to use for this technique over finite fields is $\E_4$. They then took advantage of the operator method (also called eigenvalue-method) introduced by Shkredov (see e.g. \cite{SchShk, ShkEnergies, ShkStructure}), which is tantamount to an ingenious double-counting argument using techniques from linear algebra.

Their result was improved by Chen, Kerr and Mohammadi~\cite{CheKerMoh} through a more efficient application of these techniques. Rudnev, Shakan and Shkredov~\cite{RudShaShk} further advanced the record by developing a new double-counting argument, which remains present in this paper, to yield the current state-of-the-art. This new argument circumvents the operator method, replacing it with recent tools in incidence geometry.
\begin{theorem}[Rudnev, Shakan, Shkredov~\cite{RudShaShk}]
Let $A\subseteq \mathbb{F}_p^*$. If $|A|<p^{36/67}$ then 
\[
\max\{|A+A|,|AA|\}\gtrsim |A|^\frac{11}{9}\,.
\]
\end{theorem}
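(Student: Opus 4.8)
The plan is to argue by contradiction. Write $|A+A|\le K|A|$ and $|AA|\le K|A|$, and aim for an inequality forcing $K\gg |A|^{2/9}$, which is exactly $\max\{|A+A|,|AA|\}\gtrsim|A|^{11/9}$. The first, organisational, step is to exploit the two hypotheses structurally: by the Pl\"unnecke--Ruzsa inequality, $|AA|\le K|A|$ controls every bounded-fold product or quotient set of $A$ up to a bounded power of $K$ (in particular $|A/A|\ll K^{2}|A|$), and by Ruzsa's triangle inequality $|A+A|\le K|A|$ gives $|A-A|\ll K^{2}|A|$; meanwhile a dyadic popularity pigeonhole lets me pass to a subset $A'\subseteq A$ with $|A'|\gg |A|/\log^{O(1)}|A|$ on which a chosen representation function, say $r_{A'-A'}$, is essentially flat on its support. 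Since $A\subseteq\F_p^{*}$ we may divide by elements of $A$ freely, and since the conclusion is only claimed up to $\gtrsim$ the passage to $A'$ is harmless.

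The engine is a higher additive energy. Following Shakan and Shkredov's observation that over $\F_p$ the relevant moment is the fourth one, I would work with $\E_{4}^{+}(A)=\sum_{x}r_{A-A}(x)^{4}$. On one side, two applications of Cauchy--Schwarz (starting from $\sum_{x}r_{A-A}(x)=|A|^{2}$) give $\E_{4}^{+}(A)\gg |A|^{8}/|A-A|^{3}$, which, since $|A-A|\ll K^{2}|A|$, is $\gg |A|^{5}/K^{6}$; so the sumset hypothesis already forces $\E_{4}^{+}(A)$ to be large. On the other side I need an upper bound for $\E_{4}^{+}(A)$ that feels the multiplicative smallness of $A$, and this is where the Rudnev--Shakan--Shkredov double-counting enters. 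The idea is to rewrite a dyadic piece of the sum defining $\E_{4}^{+}(A)$ as the number of solutions of a bilinear equation, of the general shape $a_{1}(a_{2}-a_{3})=a_{4}(a_{5}-a_{6})$ with the $a_{i}\in A$ and the differences ranging over popular level sets of $A-A$; equivalently, as an incidence count between a point set built from $A\times A$ and a family of planes in $\F_p^{3}$ whose defining coefficients encode products of pairs of elements of $A$, so that $|AA|\le K|A|$ (equivalently $|A/A|\le K^{2}|A|$) keeps the number of planes small.

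I would then invoke Rudnev's point--plane incidence theorem: after a further dyadic reduction making the point set $P$ and the plane set $\Pi$ comparable in size to some $N$, and provided $N\ll p^{2}$, the number of incidences is $\ll N^{3/2}+kN$, where $k$ bounds the number of points of $P$ on any line. Here $N$ has the shape $K^{O(1)}|A|^{O(1)}$, and carrying the exponents through the range condition $N\ll p^{2}$ is exactly what produces the hypothesis $|A|<p^{36/67}$. Granting that the main term $N^{3/2}$ dominates the error $kN$, this yields an upper bound $\E_{4}^{+}(A)\ll K^{c_{1}}|A|^{c_{2}}$ for explicit exponents $c_{1},c_{2}$. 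Playing it against the lower bound $\E_{4}^{+}(A)\gg |A|^{5}/K^{6}$ — routing, if needed, through the auxiliary energies $\E_{2}^{+},\E_{3}^{+}$ and the elementary inequalities tying them to $|A\pm A|$ — collapses everything to a single polynomial inequality in $K$ and $|A|$ whose solution is $K\gg |A|^{2/9}$, completing the contradiction.

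The step I expect to be the genuine obstacle is controlling the error term $kN$ so that $N^{3/2}$ wins. Rudnev's estimate is not symmetric in the points and the planes, so one must decide carefully which family of objects derived from $A$ plays which role; moreover one needs an a priori bound on the \emph{richness} $k$, i.e.\ on how many of the relevant points can be collinear. A too-rich line would force a slice of $A$ to be abnormally multiplicatively structured, which should be excluded by appealing to $|AA|\le K|A|$ once more, perhaps after yet another popularity refinement of $A$; turning this into a clean quantitative statement, with every polynomial-in-$K$ loss tracked, is the technical core on which both the exponent $11/9$ and the range $p^{36/67}$ rest.
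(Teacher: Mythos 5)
This theorem is quoted in the paper from \cite{RudShaShk} without proof, so the fair comparison is with the argument of Rudnev, Shakan and Shkredov as it is summarised and partially reproduced here (Proposition~\ref{prop:RSS argument additive}). Measured against that, your sketch has a genuine gap at its centre. You set up a sandwich on a single quantity: the H\"older lower bound $\E_4(A)\gg |A|^8/|A-A|^3\gg |A|^5K^{-6}$ (correct) against an upper bound ``$\E_4(A)\ll K^{c_1}|A|^{c_2}$'' coming from incidences, which you never state and whose derivation you only gesture at. That unproved upper bound \emph{is} the theorem: to force $K\gg|A|^{2/9}$ from your lower bound you would need something of the strength $\E_4(A)\ll K^{3}|A|^{3}$, i.e.\ $\E_4(A)\ll|AA|^3$ when $|AA|\le K|A|$, and no such estimate follows from Rudnev's point--plane theorem or Stevens--de Zeeuw. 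The specific reduction you propose is also not valid: $\E_4(A)=\sum_x r_{A-A}(x)^4$ is an eight-variable, purely additive count and cannot be rewritten as the number of solutions of $a_1(a_2-a_3)=a_4(a_5-a_6)$; in the genuine arguments (compare Lemmas~\ref{lem:E4+E2*} and \ref{lem:E4*E4+} here) multiplication enters the other way around, by substituting a popular sum for an element \emph{inside} a multiplicative or mixed energy via a regularised representation function. A single-energy sandwich of the kind you describe is essentially the Roche-Newton--Rudnev--Shkredov scheme ($\E(A)\ll|AA|^{3/2}|A|$ against $\E(A)\ge|A|^4/|A+A|$) and stops at the exponent $6/5$; what carries \cite{RudShaShk} to $11/9$ is the extra double count of the tautological equation $a-b=(a+c)-(b+c)=(a+d)-(b+d)$, producing an inequality that couples $\E_{4/3}(B)$, $\E_4(A)$ and the mixed energy $\E_4(A,\mathcal{E})$ with $|A+A|^8$, after which the point--plane and point--line bounds are applied to the separate factors. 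That mechanism is entirely absent from your proposal.

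Two further points. The error term in Rudnev's theorem is handled in these applications by the Cartesian-product-like structure of the point set, which caps the maximal number of collinear points by a trivial quantity such as $\max\{|A|,|U|\}$; it is not handled by arguing that a rich line would contradict $|AA|\le K|A|$, so your proposed fix for what you correctly identify as the delicate step would not materialise as described. Finally, the claim that the range $|A|<p^{36/67}$ ``falls out'' of the condition $N\ll p^2$ is asserted rather than verified: in \cite{RudShaShk} it arises from combining the $p$-constraints of both incidence theorems with Pl\"unnecke--Ruzsa bounds (as in Lemma~\ref{lem:PRI}) on the iterated sum and product sets created by the popularity substitutions, a bookkeeping step your sketch does not attempt.
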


Stimulated by their techniques, we improve this to the following result:
\begin{theorem}
\label{thm:main}
Let $\mathbb{F}$ be a field of characteristic $p \neq 2$.
Let $A\subseteq \mathbb{F}$. If $p >0$ suppose in addition that $|A|\ll p^\frac{1}{2}$. Then 
\[
\max\{|A\pm A|, |A * A|\} \gtrsim |A|^\frac{5}{4} 
\]
where $*\in \{\times, \div\}$. Moreover, this result applies to all four choices of binary operator.
\end{theorem}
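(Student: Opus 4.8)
The plan is to argue by contradiction. Fix one of the four assertions, say $\max\{|A+A|,|AA|\}$, suppose $|A+A|\le K|A|$ and $|AA|\le K|A|$, and aim to force $K\gg|A|^{1/4}$ — exactly the Elekes exponent $5/4$, and the threshold beyond which (over the reals) one needs Solymosi's ordering ideas. Two reductions collapse all four cases into one. First, multiplicative energies cannot distinguish products from ratios: writing $\E_k^\times(A):=\sum_\lambda r_{A/A}(\lambda)^k$, one has $\E^\times(A)=\sum_x r_{AA}(x)^2=\sum_\lambda r_{A/A}(\lambda)^2$, so $|AA|,|A/A|\ge|A|^4/\E^\times(A)$; and sum sets and difference sets enter incidence configurations of the shape $y=\lambda(x\pm s)$ symmetrically. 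So it suffices to bound a single multiplicative energy of $A$ from above in terms of $|A|$ and $|A+A|$ (with $|A-A|$ allowed as an alternative input). Second, Cauchy--Schwarz and H\"older turn the hypotheses into energy \emph{lower} bounds: $\E^\times(A)\ge|A|^3/K$ and, following the framework of Shakan--Shkredov and \cite{RudShaShk} in which $\E_4$ is the natural energy, $\E_4^\times(A)\ge|A|^8/|AA|^3\ge|A|^5/K^3$. A matching upper bound is what remains.

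That upper bound is the crux, and I expect it to be the main obstacle. The mechanism, built on the incidence-geometric double-counting of \cite{RudShaShk}, is to encode $\E_4^\times(A)$ (or a related third-moment count) as incidences between a point set and a family of lines, or planes, assembled from $A$ and $A\pm A$: a chain $a_1/b_1=\dots=a_k/b_k$ of equal ratios in $A$, after an additive shift by elements of $A\pm A$, becomes a collinearity, so the small additive doubling directly caps the sizes of the point and plane sets one produces. Pushing this through the Stevens--de~Zeeuw point--line bound, or through Rudnev's point--plane theorem together with a careful accounting of rich lines (popular $\lambda$ with $|A\cap\lambda A|$ large), should produce $\E_4^\times(A)\ll|A+A|^{a}|A|^{b}$ for some $(a,b)$. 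Combined with the lower bound, the conclusion $K\gg|A|^{1/4}$ follows precisely when $(a,b)$ lands on the critical line $5a+4b=17$ (it would be $5a+4b=14$ had one used $\E_3^\times$), whereas the exponent $11/9$ of \cite{RudShaShk} corresponds to a point short of it. Landing exactly on that line is the whole game: I would expect to need a more economical treatment of the rich-line contribution than a plain dyadic decomposition gives — possibly an iterated incidence estimate in which a first crude lower bound on $K$ is reinvested — together with a preliminary reduction to a large, regular subset of $A$ (by popularity pigeonholing, or by Balog--Szemer\'edi--Gowers after recasting the hypothesis in energy form) on which the ratio multiplicities are controlled and the configuration is clean.

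Two final remarks. The restriction $|A|\ll p^{1/2}$ should enter exactly at the incidence step: Rudnev's bound, and the Stevens--de~Zeeuw bound resting on it, is only in its useful regime when the point and plane sets — here built from $A$, $A\pm A$ and $A*A$, each of size at most $K|A|\ll|A|^{5/4}$ under the contradiction hypothesis — have size $\ll p^2$; and in any case the construction behind \eqref{eqn:SPUB} shows that nothing stronger than $\max\ll|A|^{3/2}$ can survive once $|A|\sim p^{1/2}$, so a constraint of this kind is unavoidable. And since the whole argument uses only ``$A\pm A$ small'' as input to the energy bound and ``$|A*A|\ge|A|^4/\E^\times(A)$'' to read off the lower bound, once the critical estimate is established all four inequalities of Theorem~\ref{thm:main} follow simultaneously, for $*\in\{\times,\div\}$ and either sign.
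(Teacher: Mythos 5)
The engine of your argument is missing. Everything is reduced to a single upper bound of the shape $\E_4^\times(A)\ll |A+A|^a|A|^b$ with $5a+4b=17$, to be played off against a H\"older lower bound, but you never prove such a bound; you only say the incidence machinery ``should produce'' one and that you ``would expect to need'' a more economical treatment of rich lines. That is precisely where the difficulty of the theorem sits, and there is good reason to believe this particular template cannot reach the critical line with the tools you invoke: the Stevens--de Zeeuw/Rudnev route, after a dyadic summation over rich ratios, gives bounds of the quality $\E_4^\times(A)\lesssim |A+A|^2|A|^2$, and even over $\R$, where one has the far stronger Szemer\'edi--Trotter/Solymosi input $|\{\lambda: r_{A/A}(\lambda)\ge t\}|\ll |A+A|^2/t^2$, your scheme lands at $5a+4b=18$, i.e.\ it recovers only the exponent $6/5$, not $5/4$. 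So the gap is not a technicality to be absorbed by ``an iterated incidence estimate'': closing it would require an $\E_4^\times$ bound stronger than anything currently known. Two smaller inaccuracies compound this: with your definition $\E_4^\times(A)=\sum_\lambda r_{A/A}(\lambda)^4$, H\"older gives $\E_4^\times(A)\ge |A|^8/|A/A|^3$, not $|A|^8/|AA|^3$ (the identity $\sum_x r_{AA}(x)^2=\sum_\lambda r_{A/A}(\lambda)^2$ does not persist for fourth moments), so the four variants of the theorem do not collapse into one as cheaply as you claim.

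For comparison, the paper never proves, and never needs, a single energy bound of this strength. Its key idea is to exploit the addition--multiplication symmetry of the Rudnev--Shakan--Shkredov tautological double count: Propositions~\ref{prop:RSS argument additive} and~\ref{prop:RSS argument multiplicative} are applied to one common refined set and the two resulting inequalities are \emph{multiplied}, so that only products of mixed energies, such as $\E_4(B)\E^\times(C,U)^2$ and $\E_4^\times(B)\E_4(C,U)$, have to be estimated. These products are controlled by Lemmas~\ref{lem:E4+E2*}, \ref{lem:E4*E2+}, \ref{lem:E4*E4+} and~\ref{lem:E4+E4*}, which rest on the Xue--Rudnev regularisation (Lemma~\ref{lem:CandB}) combined with the Koh--Mirzaei--Pham--Shen and Stevens--de Zeeuw incidence bounds, applied along a nested chain of refinements $A_8\subseteq\cdots\subseteq A$; after cancellation one obtains $|A|^{20}\lesssim |A+A|^8|AA|^8$, which is the exponent $5/4$, with the $p$-constraints checked via Lemma~\ref{lem:PRI}. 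To make your proposal viable you would have to either establish an $\E_4^\times$ bound on the line $5a+4b=17$ (a new and much stronger statement than your cited inputs supply) or abandon the ``one energy bound versus H\"older'' template in favour of such a symmetrised, mixed-energy argument.
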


We note that this result represents an improvement of $1/36$ compared to \cite{RudShaShk}, i.e. $5/4 = 11/9 + 1/36$. 

It is likely possible to relax the $p$-constraint in the statement of Theorem~\ref{thm:main}. Certainly, for the variants involving a difference or a ratio set, at certain steps of the proof where the $p$-constraint is calculated, it is possible to use the Pl\"unnecke-Ruzsa type result of \cite[Corollary~1.5]{KaSh}, instead of Lemma~\ref{lem:PRI}, which allows for a more efficient way of bounding certain iterated sum or product sets. However, to keep the proof short and more accessible, we do not attempt to optimise this constraint.

Our approach towards Theorem~\ref{thm:main} relies on an argument introduced in \cite{RudShaShk}. By double-counting the number of solutions to a tautological equation, we derive an inequality involving second and fourth moments of certain representation functions. In \cite{RudShaShk}, these energies are bounded individually, using the point-plane incidences bound of Rudnev~\cite{Rud18} and the point-line incidences bound of Stevens and de~Zeeuw~\cite{SdZ} respectively, yielding the final estimate. Here, we proceed differently. Firstly, relying on the basic observation that the arguments of \cite{RudShaShk} do not distinguish between addition and multiplication, we obtain an inequality involving both multiplicative and additive energies. Utilising a recent regularisation technique of Rudnev, as recorded by Xue \cite{Xue}, we can efficiently bound these mixed energies. This facilitates a more optimal application of the incidence results to the double-counting argument of \cite{RudShaShk}.

\subsection*{Notation} All sets in this paper are assumed to be finite. We use the Vinogradov notation $\ll, \gg$ to suppress absolute constants (independent of $\mathbb{F}$ and all sets) and $\gtrsim, \lesssim$ to suppress constants and factors of $\log(|A|)$ (or other set which will be clear from the context). We use $X \sim Y$ to mean $X\ll Y \ll X$ and $X\approx Y$ to mean $X\lesssim Y \lesssim X$.

\section{Preliminaries}
For finite sets $A,B\subseteq \F$ we use the standard representation function notation 
\[
r_{A+B}(x):=|\{(a,b)\in A\times B\colon a+b = x\}|
\]
and its obvious extensions to e.g. $r_{AA}(x)$. 

For $k> 1$ we define the additive  and multiplicative energies of the sets $A$ and $B$ to be
\[
\E_k(A,B) = \sum_{x}r_{A-B}^k(x) \quad \text{ and} \quad \E_k^\times(A,B) = \sum_x r_{A/B}^k(x)\,;
\]
if $A = B$ we typically write $\E_k(A)$ and if $k = 2$ we omit the subscript. Observe that if $A'\subseteq A$ then $\E_k(A',B)\leq \E_k(A,B)$ for any set $B$. 

The case $k=2$ corresponds to the number of solutions $(a,a',b,b')\in A^2\times B^2$ to the equation $a+b = a'+b'$ and so the Cauchy-Schwarz inequality gives in particular the bounds
\[
|A|^4 \leq \E(A)|A+A|\quad \text{ and } \quad |A|^4 \leq \E^\times(A) |AA|\,. 
\]

In our arguments, we often refer to a \emph{dyadic pigeonholing argument} applied to e.g. $\E_k(A,B)$ (and also its multiplicative analogue). This enables us to extract a set in the support of $\E_k(A,B)$, say $D\subseteq A-B$ and a number $t\geq 1$ so that $r_{A-B}(d)\in [t,2t)$ for each $d\in D$ and $\log(|A|)|D|t^k \geq \E_k(A,B)$. To generate this set $D$, we partition $A-B$ into $\lceil\log(|A|)\rceil$ sets
\[
D_i:= \{x\in A-B: 2^i\leq r_{A-B}(x) < 2^{i+1}\}
\]
for $i = 0,\dots, \lceil\log_2(|A|)\rceil.$ Then $\sum_i 2^{ki}|D_i|<\E_k(A,B)< \sum_i2^{k(i+1)}|D_i|$ and so by the pigeonhole principle, there exists $i_0$ so that $\log_2(|A|)|D_{i_0}|(2^{i_0})^k\gg \E_k(A,B)$. We take $D = D_{i_0}$ and $t = t_{i_0}$.

We require the following Pl\"unnecke-Ruzsa type inequality, a proof for which may be found in \cite{Pet12}.
\begin{lemma}\label{lem:PRI}
Let $A$ be a finite, non-empty subset of an abelian group. Then for integers $k,l\geq 0$ 
\[
|kA - lA| \leq \frac{|A+A|^{k+l}}{|A|^{k+l-1}},
\]
where $kA$ is used to denote the $k$-fold sum set of $A$.
\end{lemma}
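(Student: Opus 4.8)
The statement to be proved is the Plünnecke--Ruzsa inequality of Lemma~\ref{lem:PRI}: for a finite non-empty subset $A$ of an abelian group and integers $k,l\geq 0$, one has $|kA-lA|\leq |A+A|^{k+l}/|A|^{k+l-1}$. The plan is to follow the now-standard graph-theoretic route due to Petridis (the reference \cite{Pet12} cited in the excerpt), whose heart is a ``magnification'' lemma obtained by choosing an optimal subset.

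First I would isolate the key lemma: among all non-empty subsets $X\subseteq A$, pick one minimizing the ratio $K'=|X+A|/|X|$, and set $K'=|X+A|/|X|\leq |A+A|/|A|=:K$. The claim is then that $|X+A+B|\leq K'|X+B|$ for every finite set $B$; I would prove this by induction on $|B|$, the base case $|B|=1$ being the definition of $K'$, and the inductive step (adding one new element $b$ to $B'$) handled by writing $X+A+(B'\cup\{b\})$ as the union of $X+A+B'$ and $(X+A+b)$, subtracting off the overlap, and controlling the overlap by the set $Z=\{x\in X: x+A+b\subseteq X+A+B'\}$, which by minimality of the ratio satisfies $|Z+A|\geq K'|Z|$. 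Combining these inclusions-exclusions yields the desired inequality with the constant $K'$ rather than a worse power.

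Second, with the magnification lemma in hand, a straightforward induction (again applying the lemma repeatedly, each step costing one factor of $K'$) gives $|X+kA|\leq K'^{\,k}|X|$ for every $k\geq 0$; by the Ruzsa triangle inequality $|kA-lA|\cdot|X|\leq |X+kA|\cdot|X+lA|\leq K'^{\,k}|X|\cdot K'^{\,l}|X|$, so $|kA-lA|\leq K'^{\,k+l}|X|\leq K'^{\,k+l}|A|$. Since $K'\leq K=|A+A|/|A|$, this gives $|kA-lA|\leq (|A+A|/|A|)^{k+l}|A| = |A+A|^{k+l}/|A|^{k+l-1}$, which is exactly the asserted bound. (One should note Ruzsa's triangle inequality $|U|\,|V-W|\leq |U-V|\,|U-W|$ holds in any abelian group and is elementary, via an injection $(V-W)\times U \hookrightarrow (U-V)\times(U-W)$ sending $(v-w,u)\mapsto ((u-v),(u-w))$ after fixing representatives; and that $-A$ has the same cardinality and same doubling as $A$, so working with $X+A$ versus $X-A$ is harmless.)

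The main obstacle is the induction in the magnification lemma --- in particular correctly setting up the overlap set $Z$ and using the minimality of the ratio $|X+A|/|X|$ to bound $|Z+A|$ from below, then reassembling the inclusion--exclusion counts without losing the clean constant $K'$. Everything after that is routine induction plus the Ruzsa triangle inequality. Since the excerpt only needs the statement of Lemma~\ref{lem:PRI} and explicitly points to \cite{Pet12} for a proof, in the paper itself I would simply cite that reference rather than reproduce this argument; the above is the sketch one would write if a self-contained proof were wanted.
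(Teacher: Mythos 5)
Your proposal is correct and is exactly the argument the paper intends: the paper gives no proof of its own but defers to Petridis \cite{Pet12}, and you have reproduced Petridis's proof (the minimizing subset $X\subseteq A$, the magnification lemma $|X+A+B|\le K'|X+B|$ by induction on $|B|$, iteration to $|X+kA|\le K'^k|X|$, and the Ruzsa triangle inequality in its sum form $|X|\,|kA-lA|\le|X+kA|\,|X+lA|$). The sketch of the inductive step and the handling of the overlap set $Z$ are the standard and correct ones, so simply citing \cite{Pet12}, as you suggest, is the right call.
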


\subsection{Regularisation arguments}
We use the following lemma in the form recorded and proved by Xue \cite{Xue} (who in turn credits Rudnev). This lemma unifies the ad hoc regularisation techniques present in the sum-product literature, e.g. \cite{RudShaShk, Warren}; an asymmetric formulation is recorded by Stevens and Warren \cite{StWa}. Although Xue states this lemma over $\mathbb{R}$, its proof is valid over abelian groups; similarly we may take $k>0$ (see e.g. \cite{StWa}).

\begin{lemma}\label{lem:CandB}
Let $A\subseteq \F$ be finite and let $k>1$ be a real number. Then there exist sets $C \subseteq B \subseteq A$ with $|C| \gtrsim |B| \gg |A|$, and a set $S_{\tau} \subseteq B-B$ and some $\tau>0$, with the properties that
$$\E_k(B) \approx |S_{\tau}|\tau^k\,,$$
$$r_{S_{\tau} + B}(c) \approx \frac{|S_{\tau}|\tau}{|A|} \quad\quad \forall c \in C.$$
\end{lemma}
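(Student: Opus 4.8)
The plan is to prove Lemma~\ref{lem:CandB} by two successive rounds of dyadic pigeonholing, first on the energy $\E_k(B)$ to regularise the popularity of differences, and then on the fibres of the map recording which differences are ``popular'' to regularise the representation function $r_{S_\tau+B}$. First I would apply the dyadic pigeonholing argument recalled in the Preliminaries to $\E_k(A)$: this extracts a number $\tau\geq 1$ and a set $S_\tau\subseteq A-A$ with $r_{A-A}(s)\sim\tau$ for all $s\in S_\tau$ and $|S_\tau|\tau^k\gtrsim\E_k(A)$; combined with the trivial upper bound $\E_k(A)\geq|S_\tau|\tau^k$ coming from restricting the sum defining $\E_k(A)$ to $S_\tau$ together with $\E_k(A)\gtrsim |S_\tau|\tau^k$, we get $\E_k(A)\approx|S_\tau|\tau^k$, which is the first asserted property (with $B$ still to be chosen as a large subset of $A$). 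Note that $k>1$ is used here only in that $\tau$ may be taken $\geq 1$; the argument goes through for $k>0$ with the minor adjustments indicated in \cite{StWa}.

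Next I would address the second property, $r_{S_\tau+B}(c)\approx|S_\tau|\tau/|A|$ for all $c\in C$. The starting point is the double-counting identity $\sum_{x}r_{S_\tau+A}(x)=|S_\tau||A|$, obtained by counting pairs $(s,a)\in S_\tau\times A$. Since each such $x$ lies in $S_\tau+A\subseteq A-A+A$, and more usefully $x\in A$ whenever... — more precisely, writing $x=s+a$ with $s=a_1-a_2$ we get $x+a_2=a_1+a$, so the sum is supported on a set of size at most $|A-A+A|$ but we only need that it is supported on a set we can control; in fact the cleanest route is to restrict attention to $x\in A$: observe $\sum_{x\in A}r_{S_\tau+A}(x)\geq \sum_{a\in A}|\{s\in S_\tau : s+a\in A\}|$, and by a further dyadic pigeonholing on the quantity $r_{S_\tau+A}(a)$ as $a$ ranges over $A$ we extract $B\subseteq A$ with $|B|\gg|A|$ on which $r_{S_\tau+A}(a)$ is constant up to a factor of two. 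One then checks, using $r_{A-A}(s)\sim\tau$ to guarantee that a positive proportion of the mass $|S_\tau||A|$ is concentrated at the popular level rather than being swamped by an atypically sparse level, that this common value is $\approx|S_\tau|\tau/|A|$. Replacing $A$ by $B$ throughout (the first property is inherited since $\E_k(B)\leq\E_k(A)\approx|S_\tau|\tau^k$ and $\geq|S_\tau\cap(B-B)|\tau^k$, after possibly shrinking $S_\tau$), and finally running the same pigeonholing once more inside $B$ to pass to $C\subseteq B$ with $|C|\gtrsim|B|$ on which $r_{S_\tau+B}(c)$ is genuinely pinned down, completes the construction.

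The main obstacle I anticipate is not any single estimate but the bookkeeping needed to make the two regularisations compatible: each time one passes to a subset ($A\rightsquigarrow B\rightsquigarrow C$) one must verify that the previously established property survives, and in particular that shrinking $A$ to $B$ does not destroy the lower bound $|S_\tau|\tau^k\lesssim\E_k(B)$ — this is where one must be slightly careful to either intersect $S_\tau$ with $B-B$ and argue that a constant proportion of its elements still have $r_{B-B}\sim\tau$, or to order the pigeonholing so that $B$ is chosen before fixing $S_\tau$. The other point requiring care is the claim that the average $|S_\tau|\tau/|A|$ is actually \emph{attained} up to constants on a large set, i.e. that the distribution of $r_{S_\tau+A}(a)$ over $a\in A$ is not heavily skewed; this follows from a Cauchy--Schwarz / convexity input bounding the second moment $\sum_a r_{S_\tau+A}^2(a)$ in terms of $\E(A)$ and $|S_\tau|$, which forces the mass to live near the mean. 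Everything else is routine dyadic decomposition of the kind already described in the Preliminaries.
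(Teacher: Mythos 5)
There is a genuine gap, and it is at the heart of the lemma. Note first that the paper does not prove Lemma~\ref{lem:CandB} at all: it is quoted from Xue (who credits Rudnev), and the known proof is an \emph{iterative} refinement — one repeatedly passes to subsets of $A$, re-extracts the popular energy level of the \emph{current} subset, and uses a decrement/termination argument to show the process stops after losing only a constant proportion of $A$. Crucially, the final pair $(B,\tau)$ need have nothing to do with the popular level of $\E_k(A)$, and indeed $\E_k(B)$ need not be comparable to $\E_k(A)$. Your proposal instead fixes $S_\tau$ once and for all from $\E_k(A)$ and then tries to pin $r_{S_\tau+A}$ down on a large subset by one more dyadic pigeonholing plus a second-moment argument. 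That cannot work: the most popular dyadic class of values of $r_{S_\tau+A}(a)$, $a\in A$, need not sit anywhere near the mean $|S_\tau|\tau/|A|$ (it can be the class $r=0$), and the non-concentration you invoke would require a bound of the shape $\sum_{a\in A} r_{S_\tau+A}^2(a)\lesssim |S_\tau|^2\tau^2/|A|$, i.e.\ a strong upper bound on $\E(S_\tau,A)$, which is false in general. (Also, pigeonholing only yields a class of size $\gtrsim |A|/\log|A|$, not $\gg|A|$ as the lemma demands for $B$.)

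Concretely, take $k=4$ (the case the paper uses), $A=Q\cup P\subset\Z$ with $Q=\{1,\dots,m\}$, $m\sim n^{9/10}$, $n=|A|$, and $P$ a set of $n-m$ points whose pairwise distances (and distances to $Q$) exceed $10m$, with all differences distinct. Then $\E_4(A)\sim m^5\gg n^4$, so your first pigeonholing selects $\tau\sim m$ and $S_\tau$ a set of $\sim m$ small differences, giving the putative mean $|S_\tau|\tau/|A|\sim m^2/n$. But $r_{S_\tau+A}(a)=0$ for every $a\in P$, i.e.\ for all but $m=o(|A|/\log|A|)$ elements of $A$; the entire mass $\sum_{a\in A}r_{S_\tau+A}(a)\sim|S_\tau|\tau$ lives on $Q$. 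Hence no set $C$ with $|C|\gtrsim|A|$ and $r_{S_\tau+A}(c)\approx|S_\tau|\tau/|A|$ exists for this $S_\tau$, and your fallback of intersecting $S_\tau$ with $B-B$ also fails: the only viable large $B$ is essentially $P$, for which $S_\tau\cap(B-B)=\emptyset$ and $\E_4(B)\sim n^4\not\approx\E_4(A)$. The lemma's conclusion in this example is obtained precisely by discarding $Q$ and re-pigeonholing the energy of $B=P$ (new level $\tau=|P|$, $S_\tau=\{0\}$) — the iterative re-extraction, together with the bookkeeping showing it terminates after removing only $O(|A|)$ elements, is the missing idea, and it is what Xue's proof supplies.
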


We also need the following lemma, recorded by Rudnev and Stevens \cite[Lemma 1]{RudSte}; ad hoc statements of this result are similarly present within the literature, see for instance \cite[Lemma 3.1]{RudShaShk}.

\begin{lemma}\label{lem:regu}
Let $\mathcal{R}_\epsilon$ be a deterministic rule (procedure) with parameter $\epsilon\in (0,1)$ that, to every sufficiently large finite additive set $X$, associates a subset $\mathcal{R}_\epsilon(X)\subseteq X$ of cardinality $|\mathcal{R}_\epsilon(X)|\geq (1-\epsilon)|X|$.

For any such rule $\mathcal{R}_\epsilon$, any $s>1$ and a sufficiently large finite set $A$,  set $\epsilon= c_1\log^{-1}(|A|)$ for some  $c_1\in (0,1)$. 
Then there exists a set $B\subseteq A$ (depending on $\mathcal{R}_\epsilon, \,s$), with $|B|\geq (1-c_1) |A|$ such that 
\[\E_s(\mathcal{R}_\epsilon(B))\geq c_2\,\E_s(B)\,,\]
 for some constant  $c_2=c_2(s, c_1)$ in $(0,1]$. \end{lemma}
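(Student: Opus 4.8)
The statement to prove is Lemma~\ref{lem:regu}: given a deterministic rule $\mathcal{R}_\epsilon$ that shrinks any large finite set by at most a $(1-\epsilon)$ factor, one can iterate it so that the energy $\E_s$ does not collapse. The plan is to argue by contradiction through a \emph{descent}. Suppose that for every $B \subseteq A$ with $|B| \geq (1-c_1)|A|$ we had $\E_s(\mathcal{R}_\epsilon(B)) < c_2 \E_s(B)$, with $c_2$ a small constant to be chosen and $\epsilon = c_1 \log^{-1}(|A|)$. Starting from $B_0 = A$, define $B_{j+1} = \mathcal{R}_\epsilon(B_j)$ as long as $|B_j| \geq (1-c_1)|A|$. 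Each step loses at most an $\epsilon|B_j| \leq \epsilon|A|$ fraction of elements, so the cardinality can stay above $(1-c_1)|A|$ for at least $c_1/\epsilon = \log(|A|)$ steps; meanwhile each step multiplies $\E_s$ by less than $c_2$.

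The key step is to derive a contradiction from running this descent for $\sim \log(|A|)$ steps. After $m \sim \log(|A|)$ steps we would obtain a set $B_m \subseteq A$ with $|B_m| \geq (1-c_1)|A| \gg |A|$ but $\E_s(B_m) < c_2^{m} \E_s(A)$. Choosing $c_2 < 1$ a fixed constant, $c_2^{m} = c_2^{c\log|A|} = |A|^{-c'}$ for some $c' = c'(c_2, c_1) > 0$. On the other hand, any set $X$ with $|X| \gg |A|$ satisfies the trivial lower bound $\E_s(X) \geq |X|^{s+1}/|X-X|^{s-1} \geq |X|^{s+1}/|X|^{2(s-1)}$... more simply $\E_s(X) \geq |X|^{2}$ (each diagonal-type contribution), hence $\E_s(B_m) \gg |A|^{2}$. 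Taking $c' < 2$ — which we may ensure by taking $c_2$ not too small relative to the number of iterations, i.e.\ choosing $c_2 = c_2(s,c_1)$ appropriately — these two estimates contradict each other once $|A|$ is large. This forces the existence of at least one $B = B_j$ in the chain with $\E_s(\mathcal{R}_\epsilon(B)) \geq c_2 \E_s(B)$ and $|B| \geq (1-c_1)|A|$, which is the assertion.

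The bookkeeping obstacle, and the step to be careful about, is the interplay between the three constants: the per-step shrinkage $\epsilon$ must be small enough (relative to $1/\log|A|$) that the chain survives for the required number of steps, while the energy-loss factor $c_2$ must be chosen so that $c_2$ raised to that number of steps still dominates the trivial lower bound $\E_s \gg |A|^2$ — i.e. $c_2$ must be bounded below by a fixed function of $s$ and $c_1$, not allowed to depend on $|A|$. Concretely one sets the number of survivable steps to be $m = \lfloor c_1/\epsilon \rfloor = \lfloor \log(|A|) \rfloor$ and then needs $c_2^{m} \gg |A|^{-2}$, i.e. $c_2 \gg |A|^{-2/m} = e^{-2}$, so any fixed $c_2 \in (e^{-2}, 1)$ suffices; tracking that this choice is legitimate (independent of $A$) is the entire content of the argument. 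One should also note that the hypothesis "$A$ sufficiently large" is used precisely to make the contradiction $|A|^{-c'} \ll |A|^2$ effective and to guarantee each $B_j$ in the chain is large enough for $\mathcal{R}_\epsilon$ to apply.
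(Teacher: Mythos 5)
Your overall strategy -- iterate the rule to get a descending chain $B_0=A$, $B_{j+1}=\mathcal{R}_\epsilon(B_j)$, note that with $\epsilon=c_1\log^{-1}(|A|)$ the chain keeps $|B_j|\geq(1-c_1)|A|$ for $m\sim\log(|A|)$ steps, and compare the cumulative energy loss $c_2^m$ against trivial polynomial bounds on $\E_s$ -- is exactly the standard descent behind this lemma (the paper itself gives no proof, citing Rudnev--Stevens, whose argument is this iteration). However, the quantitative heart of your contradiction is wrong, and in the wrong direction. From the assumed failure you get $\E_s(B_m)<c_2^m\,\E_s(A)$, and from triviality $\E_s(B_m)\gg|A|^2$; these two statements are \emph{not} contradictory by themselves, because $\E_s(A)$ can be as large as $|A|^{s+1}$. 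You must insert the trivial upper bound $\E_s(A)\leq|A|^{s+1}$ (from $r_{A-A}\leq|A|$ and $\sum_x r_{A-A}(x)=|A|^2$), and then the contradiction requires $c_2^m\,|A|^{s+1}<(1-c_1)^2|A|^2$, i.e.\ $c_2^m\lesssim|A|^{1-s}$, i.e.\ $c_2$ \emph{at most} roughly $e^{-(s-1)}$. That is an upper bound on $c_2$ depending on $s$ (consistent with the statement $c_2=c_2(s,c_1)$), whereas you conclude the opposite constraint ``$c_2^m\gg|A|^{-2}$, so any fixed $c_2\in(e^{-2},1)$ suffices'' and assert that $c_2$ must be ``bounded below'' and independent of $s$. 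Concretely, with $c_2=0.9$ (inside your claimed admissible range) and, say, $s=4/3$ with $\E_{4/3}(A)\sim|A|^{7/3}$, one has $c_2^m\,\E_s(A)\sim|A|^{7/3+\ln(0.9)}\gg|A|^2$, so no contradiction arises and your argument proves nothing for such $c_2$; for larger $s$ the failure is worse. The fix is simply to choose $c_2$ small, e.g.\ $c_2=\tfrac12 e^{-(s-1)}$, and run your own descent: then $\E_s(B_m)<c_2^m|A|^{s+1}<(1-c_1)^2|A|^2\leq|B_m|^2\leq\E_s(B_m)$ for $|A|$ large, a genuine contradiction.

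Two smaller points: your ``trivial lower bound'' display is garbled -- H\"older gives $\E_s(X)\geq|X|^{2s}/|X-X|^{s-1}\geq|X|^2$ (not $|X|^{s+1}/|X-X|^{s-1}$), and the diagonal term alone gives only $r_{X-X}(0)^s=|X|^s$, not $|X|^2$; either polynomial lower bound would serve, but as written the justification is incorrect. Also, when quantifying the contradiction hypothesis you should note that it is only needed for the sets actually appearing in the chain (each of which satisfies $|B_j|\geq(1-c_1)|A|$, so this is fine), and that the final $B$ produced is some $B_j$ with $|B_j|\geq(1-c_1)|A|$, as the statement requires.
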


\subsection{Energy Bounds I}
In both this subsection and the subsequent we use Lemma~\ref{lem:CandB} to obtain mixed energy bounds. We first obtain bounds bound $\E_4$ and $\E_2$.

From the regularisation technique of Lemma~\ref{lem:CandB}, we obtain a subset $C\subseteq A$ for which we have the multiplicative structure described in the previous sentence. This enables us to attain the following mixed-energy bounds. 

\begin{lemma}\label{lem:E4+E2*}
Let $A\subseteq \mathbb{F}$. Then there exist sets $C\subseteq B\subseteq A$ with $|C|\gtrsim |B| \gg |A|$ so that for any set $U$ satisfying $|U||A||A-A|\ll p^2$ we have
\begin{equation}\label{e:E4+E2*}
\E_4(B)\E^\times(C,U)^2 \lesssim |A|^7 |U|^3\,.    
\end{equation}
\end{lemma}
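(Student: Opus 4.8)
The plan is to combine the multiplicative regularisation supplied by Lemma~\ref{lem:CandB} with the point-plane incidence bound of Rudnev~\cite{Rud18}, used in the guise that bounds the fourth additive energy $\E_4(B)$. Applying Lemma~\ref{lem:CandB} to the set $A$ (with $k=4$) produces sets $C\subseteq B\subseteq A$ with $|C|\gtrsim|B|\gg|A|$, a set $S_\tau\subseteq B-B$ and a threshold $\tau>0$ such that $\E_4(B)\approx|S_\tau|\tau^4$ and $r_{S_\tau+B}(c)\approx|S_\tau|\tau/|A|$ for every $c\in C$. The latter property is the engine: it converts the single quantity $\E_4(B)$ into a rich supply of additive quadruples that are \emph{uniformly} distributed over the dense set $C$, which is precisely the kind of input an incidence argument can digest.

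\textbf{The counting set-up.} I would count solutions to an equation of the shape
\[
\frac{c_1}{c_2} = u, \qquad c_1,c_2\in C,\ u\in U,
\]
but weighted so that each $c_i$ comes equipped with a representation $c_i = s_i + b_i$ with $s_i\in S_\tau$, $b_i\in B$. On one hand, since every $c\in C$ has $\approx|S_\tau|\tau/|A|$ such representations, the number of these weighted incidences is $\approx\E^\times(C,U)\cdot(|S_\tau|\tau/|A|)^2$. On the other hand, rewriting $c_1 = u c_2$ as $s_1+b_1 = u(s_2+b_2)$ realises each solution as an incidence between the point $(s_1,b_1,?)$ — more precisely a point built from $(s_1,b_1,s_2)$ or $(b_1,b_2,u)$ depending on how one groups the variables — and a plane determined by the remaining data. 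The free parameters should be arranged so that one has roughly $|S_\tau|^2|B|$ (or $|S_\tau||B|^2$) points and a comparable number of planes, with the multiplicity of any single plane controlled by $|A|$; the hypothesis $|U||A||A-A|\ll p^2$ is exactly what is needed to stay below the characteristic-$p$ threshold in Rudnev's theorem, since $S_\tau\subseteq B-B\subseteq A-A$ contributes the $|A-A|$ factor and $U$ the $|U|$ factor.

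\textbf{Assembling the inequality.} Rudnev's point-plane bound gives that the number of incidences is $\lesssim N^{3/2} + N K$, where $N$ is the common point/plane count and $K$ the maximum plane multiplicity; under the $p$-constraint the first term dominates, yielding an upper bound of the form $\lesssim\bigl(|S_\tau|^2|B|\bigr)^{3/2}$ up to factors of $|U|$ and $|A|$ absorbed from the parametrisation. Equating this with the lower bound $\E^\times(C,U)^2(|S_\tau|\tau/|A|)^4$ — squaring the weighted-incidence count to get a clean second-moment expression — and then substituting $\E_4(B)\approx|S_\tau|\tau^4$ to trade $|S_\tau|\tau^4$ for $\E_4(B)$, the powers of $|S_\tau|$ and $\tau$ should cancel to leave precisely $\E_4(B)\,\E^\times(C,U)^2\lesssim|A|^7|U|^3$. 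Getting the bookkeeping of these exponents to close is the delicate part, and it is where the careful choice of which three of the five variables $(s_1,s_2,b_1,b_2,u)$ become point-coordinates versus plane-coefficients matters.

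\textbf{Main obstacle.} The crux is arranging the incidence geometry so that (i) the point set and plane set genuinely have size $\sim|S_\tau|^2|B|$ with no Cartesian-product degeneracies forcing a weaker bound, (ii) the maximum plane multiplicity $K$ is small enough — essentially $O(|A|)$ — that the $NK$ term is swallowed under the stated $p$-constraint, and (iii) the homogenisation of $c_1/c_2 = u$ into a genuine incidence between points and \emph{planes} (rather than a more degenerate configuration) is valid; this is the standard but fiddly trick of clearing the denominator and reading off an affine equation linear in an appropriate triple. One must also check that passing from $C$ back up to $B$ in the $\E_4$ factor is harmless, which is immediate since $C\subseteq B$ forces $\E_4(C)\le\E_4(B)$ — but here we need $\E_4(B)$ on the \emph{left}, so it is the regularisation identity $\E_4(B)\approx|S_\tau|\tau^4$, not a monotonicity step, that does the work, and one must be careful that every $\approx$ and $\lesssim$ only hides logarithms in $|A|$.
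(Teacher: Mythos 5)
Your overall strategy --- regularise with Lemma~\ref{lem:CandB} at $k=4$, replace each $c\in C$ by its $\approx |S_\tau|\tau/|A|$ representations $s+b$ with $s\in S_\tau\subseteq B-B$, and finish with an incidence-type bound whose $p$-constraint is exactly $|U||A||A-A|\ll p^2$ --- is the paper's strategy; the only real difference is that the paper invokes the Koh--Mirzaei--Pham--Shen count (Lemma~\ref{lem:KMPS}, applied to $f(x,y,z)=x(y+z)$) as a black box, whereas you propose to re-run Rudnev's point-plane theorem by hand. But as written there is a genuine gap in the counting set-up: the equation $c_1/c_2=u$ with $(c_1,c_2,u)\in C\times C\times U$ counts $\sum_{u\in U} r_{C/C}(u)$, which is not the energy $\E^\times(C,U)=\sum_x r_{C/U}(x)^2=\sum_q r_{C/C}(q)\,r_{U/U}(q)$, so your claimed identity ``weighted count $\approx \E^\times(C,U)\cdot(|S_\tau|\tau/|A|)^2$'' is false, and squaring the count afterwards does not repair it: an upper bound on the three-variable count gives no upper bound on $\E^\times(C,U)$. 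You must start from the four-variable energy equation $c_1u_1=c_2u_2$, substitute $c_i=s_i+b_i$ in both slots, and bound the six-variable count of $(s_1+b_1)u_1=(s_2+b_2)u_2$ over $S_\tau^2\times B^2\times U^2$ --- which is precisely what Lemma~\ref{lem:KMPS} delivers, and then the main term $(|S_\tau||B||U|)^{3/2}$ closes the exponent bookkeeping to $|A|^4|B|^3|U|^3\le |A|^7|U|^3$.

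The second missing piece is the treatment of the non-main term, and your remark that ``under the $p$-constraint the first term dominates'' conflates two different things: the hypothesis $|U||A||A-A|\ll p^2$ only legitimises the incidence bound; it does not make $(|S_\tau||B||U|)^{3/2}$ dominate the multiplicity term $\max\{|U|,\min\{|S_\tau|,|B|\}\}\,|U||S_\tau||B|$. In the paper this requires a separate case analysis (Cases 1--3): in the regimes where the secondary term could dominate one either concludes via the trivial estimate $\E_4(B)\E^\times(C,U)^2\le |S_\tau||B|^4|C|^2|U|^2\min\{|C|,|U|\}^2$ or reaches a contradiction. Without that step the stated inequality does not follow from the incidence bound. (Minor but necessary: remove $0$ from $U$ and from the dyadic set $S_\tau$ before any multiplicative count, and account for the discarded solutions, as the paper does.)
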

Similarly we have the multiplicative analogue of this:
\begin{lemma}\label{lem:E4*E2+}
Let $A\subseteq \mathbb{F}$. Then there exist sets $C\subseteq B\subseteq A$ with $|C|\gtrsim |B| \gg |A|$ so that for any set $U$ satisfying  $|U||A||A/A|\ll p^2$ we have
\begin{equation}\label{e:E4*E2+}
\E_4^\times(B)\E(C,U)^2 \lesssim |A|^7 |U|^3\,.    
\end{equation}
\end{lemma}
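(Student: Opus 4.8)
\textbf{Proof proposal for Lemma~\ref{lem:E4*E2+}.}

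The plan is to prove this directly in the multiplicative-energy setting, mirroring the structure of the proof of Lemma~\ref{lem:E4+E2*} but with the roles of addition and multiplication interchanged from the very start, so that every application of a regularisation or incidence tool is carried out in the correct group. First I would apply Lemma~\ref{lem:CandB} with $k=4$ to the multiplicative structure of $A$: since that lemma is valid over any abelian group, applying it to $(A,\times)$ produces sets $C\subseteq B\subseteq A$ with $|C|\gtrsim|B|\gg|A|$, a set $S_\tau\subseteq B/B$ and a scale $\tau>0$ such that $\E^\times_4(B)\approx|S_\tau|\tau^4$ and $r_{S_\tau\cdot B}(c)\approx |S_\tau|\tau/|A|$ for every $c\in C$. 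Here $S_\tau\cdot B$ denotes the product set $\{sb:s\in S_\tau,b\in B\}$ with multiplicity.

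Next I would set up the double count. Fix the set $U$ with $|U||A||A/A|\ll p^2$. The quantity to estimate is the number of solutions to a tautological identity that simultaneously involves the multiplicative representation $r_{S_\tau\cdot B}$ (which is essentially constant on $C$, by the regularisation) and the additive representation $r_{C/U}$ or $r_{C-U}$ coming from $\E(C,U)$. Concretely, one counts quadruples $(s,b,c,u)$ with $sb=c$ and with an additive constraint $c-u$ lying in a fixed popular difference set extracted from $\E(C,U)$ by dyadic pigeonholing; writing $c=sb$ turns the additive constraint into a relation between the variables $s,b,u$ that is linear in $u$ and bilinear-ish in $(s,b)$. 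Using the near-constancy of $r_{S_\tau\cdot B}$ on $C$, the left side of the double count is bounded below by roughly $|S_\tau|\tau/|A|$ times the number of pairs $(c,u)$ with the popular additive relation, i.e.\ by $(|S_\tau|\tau/|A|)\cdot(\E(C,U)/\text{(dyadic level)})$ after the usual dyadic reductions; combining with $\E^\times_4(B)\approx|S_\tau|\tau^4$ this feeds the factor $\E^\times_4(B)\E(C,U)^2$ we want to bound.

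For the upper bound I would recast the count as an incidence problem between points and planes (or points and lines) and apply Rudnev's point-plane incidence bound~\cite{Rud18} — this is exactly where the hypothesis $|U||A||A/A|\ll p^2$ enters, guaranteeing that the relevant point set is below the threshold $p^2$ so that the main term in the incidence bound dominates. The additive relation, once $c=sb$ is substituted, should be arranged as a collection of planes indexed by $u\in U$ (and by the dyadic parameter from $S_\tau$) passing through a point set built from $B$, $C$ and the scale $\tau$; counting incidences gives an upper bound of the shape $|A|^{?}|U|^{?}$ times lower-order terms, and balancing exponents against the lower bound yields $\E^\times_4(B)\E(C,U)^2\lesssim|A|^7|U|^3$.

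\textbf{Main obstacle.} The delicate point is the bookkeeping of the dyadic parameters — the level $\tau$ from Lemma~\ref{lem:CandB}, the popularity level of the additive differences in $\E(C,U)$, and possibly a further dyadic level inside the incidence count — and making sure all of them cancel cleanly to leave precisely the exponents $7$ and $3$. A secondary subtlety is verifying that the $p$-constraint needed for Rudnev's bound is implied by the stated hypothesis $|U||A||A/A|\ll p^2$ after the substitution $c=sb$; here one must control the size of the auxiliary point set, which involves $|B/B|\le|A/A|$ (or a Plünnecke-type bound via Lemma~\ref{lem:PRI}) rather than $|B+B|$, and it is important that this is the \emph{multiplicative} doubling of $A$ that appears, consistent with the hypothesis as written. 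Everything else is the same linear-algebra/incidence mechanism as in the additive case, just transported by the field automorphism-free swap $(+,\times)\mapsto(\times,+)$ applied consistently throughout, rather than asserted at the end.
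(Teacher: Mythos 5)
Your overall plan is the paper's: Lemma~\ref{lem:E4*E2+} is indeed obtained by rerunning the proof of Lemma~\ref{lem:E4+E2*} with addition and multiplication interchanged from the start, and your first step (Lemma~\ref{lem:CandB} applied to $(A,\times)$ with $k=4$, giving $\E_4^\times(B)\approx|S_\tau|\tau^4$ and $r_{S_\tau\cdot B}(c)\approx|S_\tau|\tau/|A|$ on $C$) is exactly right. The gap is in the central count. What you describe --- quadruples $(s,b,c,u)$ with $sb=c$ and $c-u$ lying in a popular level set extracted from $\E(C,U)$ by pigeonholing --- is a four-variable, single-representation count, and feeding it into a point-line bound (Lemma~\ref{lem:SdZCP}) and squaring is too lossy for a \emph{second}-moment estimate: tracking the dyadic parameters, the best one gets this way is $\E_4^\times(B)\E(C,U)^2\lesssim|A|^8|U|^3$, a full factor of $|A|$ short. (That route is the right one for the \emph{fourth}-moment Lemma~\ref{lem:E4+E4*}, not for $\E_2$.) What is actually needed is to expand the full energy $\E(C,U)=|\{c_1-u_1=c_2-u_2\}|$ with no pigeonholing on the $U$-side, substitute $c_i\mapsto s_ib_i$ on \emph{both} sides at the cost of a factor $(|A|/(|S_\tau|\tau))^2$, and bound the resulting six-variable count $|\{s_1b_1-u_1=s_2b_2-u_2\}|$ by $(|S_\tau||B||U|)^{3/2}+\mathrm{error}$, i.e.\ Lemma~\ref{lem:KMPS} in its general non-degenerate-quadratic form (here $f(x,y,z)=xy-z$ rather than $x(y+z)$), equivalently Rudnev's point-plane theorem. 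The main term then yields $|S_\tau|\tau^4\cdot\frac{|A|^4}{|S_\tau|^4\tau^4}\cdot(|S_\tau||B||U|)^{3}=|A|^4|B|^3|U|^3\leq|A|^7|U|^3$, which is the only place the exponents $7$ and $3$ can come from; the hypothesis $|U||A||A/A|\ll p^2$ is then precisely the condition $|U||B||S_\tau|\ll p^2$ needed for that lemma, since $S_\tau\subseteq B/B\subseteq A/A$. Your proposal never performs this verification and explicitly defers it, so the decisive step is missing.

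A second omission: the error terms of the incidence/energy bound are not automatically negligible. In the model proof this requires removing $0$ from the relevant sets and a three-case analysis showing that whenever $\max\{|U|,\min\{|S_\tau|,|B|\}\}\,|U||S_\tau||B|$ dominates $(|S_\tau||B||U|)^{3/2}$, either the trivial bound on $\E_4^\times(B)\E(C,U)^2$ already gives $|A|^7|U|^3$ or one reaches a contradiction. This is a genuine part of the argument, not bookkeeping, and should appear in a complete write-up.
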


The proofs are almost identical so we prove only the first lemma.
For this we require the following auxiliary result of Koh, Mirzaei, Pham and Shen \cite[Lemma 2.4]{KohMirPhaShe}.
\begin{lemma}\label{lem:KMPS}
Let $\mathbb{F}$ be a field of characteristic not equal to two and define $f(x,y,z) = x(y+z)$. 
Let $X,Y,Z \subseteq \mathbb{F}^*$. If $\text{char}(\mathbb{F})= p>0$, suppose that $|X||Y||Z| \ll p^2$. Then
\begin{align*}
|\{(x_1,x_2,y_1,y_2,z_1,z_2) \in X^2 \times Y^2 \times Z^2 &:   f(x_1,y_1,z_1) =   f(x_2,y_2,z_2)\}|\\
&\ll (|X||Y||Z|)^{3/2} + \max\{|X|, \min\{|Y|,|Z|\}\}|X||Y||Z|\,.
\end{align*}
\end{lemma}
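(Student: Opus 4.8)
\textbf{Plan for the proof of Lemma~\ref{lem:KMPS}.}
The plan is to convert the solution-counting problem for the equation $x_1(y_1+z_1) = x_2(y_2+z_2)$ into a point–line or point–plane incidence count, to which Rudnev's point-plane incidence bound \cite{Rud18} (or an affine/projective variant) can be applied. The first step is to handle the degenerate contribution: the equation $x_1(y_1+z_1)=x_2(y_2+z_2)$ always has solutions coming from $y_1+z_1 = 0 = y_2+z_2$ (a ``zero stratum''), which contribute at most $|X|^2 \cdot |\{(y,z): y+z=0\}|^2 \le |X|^2\min\{|Y|,|Z|\}^2$; this is dominated by the $\max\{|X|,\min\{|Y|,|Z|\}\}|X||Y||Z|$ term and can be set aside. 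On the remaining solutions we may write $x_1/x_2 = (y_2+z_2)/(y_1+z_1)$, so we are counting quadruples where a ratio from $X/X$ equals a ratio from $(Y+Z)/(Y+Z)$ — but a cleaner route is to keep it as an incidence problem directly.

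The key step is the incidence reformulation. Fix the variables $x_2,y_2,z_2$; each choice determines a value $w := x_2(y_2+z_2)$, and then the equation $x_1(y_1+z_1) = w$ is a constraint on $(x_1,y_1,z_1)$. Rather, I would parametrize points and planes: to each pair $(x_1, (y_1,z_1))\in X\times(Y\times Z)$ associate the point $(x_1, y_1, z_1)\in\mathbb{F}^3$ (or better, after the substitution $s_1 = y_1+z_1$, the point $(x_1,s_1)$ weighted by $r_{Y+Z}(s_1)$), and to each pair $(x_2,(y_2,z_2))$ associate the surface $\{(x_1,s_1): x_1 s_1 = x_2 s_2\}$ where $s_2 = y_2+z_2$. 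Incidences between these points and surfaces count exactly the non-degenerate solutions. Because the surface $x_1 s_1 = c$ is a hyperbola rather than a line, I would instead take logarithms in the multiplicative sense — i.e. work with the third Rudnev-type bound after a change of variables turning $x\mapsto 1/x$, so that $x_1 s_1 = x_2 s_2$ becomes a genuinely bilinear/planar relation $x_1 s_1 - x_2 s_2 = 0$, which is a point-plane incidence in the six coordinates $(x_1,s_1,x_2,s_2)$ grouped as points $(x_1, s_1, x_1 s_1)$... Actually the correct and standard move here is: the set of solutions to $x_1 s_1 = x_2 s_2$ is counted by incidences between the point set $P = \{(x_1, s_1): x_1\in X, s_1\in S\}$ (with $S$ the support of $Y+Z$, points weighted by multiplicity) and the plane set $\Pi = \{z = x_2 \cdot \text{(something)}\}$ obtained by viewing $x_2 s_2$ as fixed and $x_1 s_1$ as a product — i.e. one applies Rudnev's bound to the collinear-triples / point-plane setup $x_2 s_2 = x_1 s_1$ treating $(x_1, s_1)$ as a point on the Grassmannian of lines through origin, as in the now-standard reduction (see e.g. \cite{RudShaShk, SdZ}).

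Concretely, after discarding the zero stratum, the count equals $\sum_{\lambda} r_{X\cdot S}(\lambda)^2$ weighted appropriately, where $S = Y+Z$ with multiplicity $r_{Y+Z}$; but more efficiently, it equals the number of collinear point triples or point-plane incidences $I(P,\Pi)$ with $|P| \le |X|\min\{|Y|,|Z|\}\cdot(\text{avg mult})$ — the cleanest bookkeeping is to note $|P|\le |X||Y|$ (or $|X||Z|$) counting pairs $(x_1,(y_1,z_1))$, and likewise $|\Pi|\le |X||Y|$, and the relevant ``richness'' parameter is controlled by $\min\{|Y|,|Z|\}$ since for fixed $s_1$ the number of $(y_1,z_1)$ with $y_1+z_1 = s_1$ is at most $\min\{|Y|,|Z|\}$. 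Feeding $|P|\sim|X||Y|$, $|\Pi|\sim|X||Z|$ and the per-plane point bound $k \lesssim |X|+\min\{|Y|,|Z|\}$ into Rudnev's incidence theorem $I(P,\Pi)\ll |P|^{1/2}|\Pi| + k|\Pi|$ (valid when $|P|\ll p^2$, which is guaranteed by $|X||Y||Z|\ll p^2$ after symmetrizing), and optimizing over the choice of which of $|Y|,|Z|$ plays which role, yields $(|X||Y||Z|)^{3/2} + \max\{|X|,\min\{|Y|,|Z|\}\}|X||Y||Z|$ after collecting terms.

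\textbf{Main obstacle.} The delicate point is setting up the incidence structure so that Rudnev's bound applies with the correct parameters and $p$-constraint: one must (i) correctly handle the weights coming from $r_{Y+Z}$ (the points are not a genuine set but a multiset, so a dyadic decomposition of $Y+Z$ by fiber size may be needed, or one argues directly with the pair count), (ii) verify the non-degeneracy hypothesis of Rudnev's theorem — no plane contains too many points — which is exactly where the $\max\{|X|,\min\{|Y|,|Z|\}\}$ term is born and requires checking that a single equation $x_2 s_2 = x_1 s_1$ with $(x_2,s_2)$ fixed does not pin down more than $|X| + \min\{|Y|,|Z|\}$ of the $(x_1,(y_1,z_1))$, and (iii) confirm $|P| \ll p^2$ reduces to $|X||Y||Z|\ll p^2$ after choosing the smaller of $|Y|,|Z|$ for the fiber role. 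Getting the asymmetric roles of $Y$ and $Z$ right — which one bounds the point set size and which one bounds the fiber — is what produces the $\min\{|Y|,|Z|\}$ (rather than $\max$) in the final bound, and is the step most prone to error.
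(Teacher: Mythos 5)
First, a point of order: the paper does not prove Lemma~\ref{lem:KMPS} at all --- it is imported verbatim from Koh, Mirzaei, Pham and Shen \cite[Lemma 2.4]{KohMirPhaShe}, so there is no in-paper proof to compare against. Your general strategy (split off the zero stratum $y_1+z_1=0=y_2+z_2$, then bound the remaining count via Rudnev's point--plane incidence theorem) is indeed the strategy used in the cited source, and your treatment of the degenerate stratum is correct: $|X|^2\min\{|Y|,|Z|\}^2 \leq \max\{|X|,\min\{|Y|,|Z|\}\}|X||Y||Z|$.

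However, there is a genuine gap at the heart of the argument: the incidence setup is never actually fixed. You cycle through several mutually incompatible formulations (hyperbolae in the $(x,s)$-plane, a ``Grassmannian of lines through the origin'', a six-coordinate grouping) and defer the real reduction to ``the now-standard move'', which is precisely the content of the lemma. Worse, the one concrete parameter assignment you commit to cannot be correct. With $|P|\sim|X||Y|$ and $|\Pi|\sim|X||Z|$ you are only accounting for four of the six variables $(x_1,y_1,x_2,z_2)$; the variables $z_1,y_2$ would have to enter as weights on points and planes, which you acknowledge but never resolve (a dyadic decomposition of $r_{Y+Z}$ changes both $|P|$ and the collinearity parameter $k$ at each scale, and the $k|\Pi|$ terms do not trivially resum to the claimed error term). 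Moreover, plugging these sizes into $I(P,\Pi)\ll|P|^{1/2}|\Pi|+k|\Pi|$ gives a main term $|X|^{3/2}|Y|^{1/2}|Z|$, not $(|X||Y||Z|)^{3/2}$; to recover the stated main term one needs $|P|\sim|\Pi|\sim|X||Y||Z|$ (points and planes each indexed by full triples), which also contradicts your claim that the hypothesis $|P|\ll p^2$ ``reduces to'' $|X||Y||Z|\ll p^2$ under your stated choice of $|P|$. Until a single, explicit point set and plane set are written down --- with the equation $x_1(y_1+z_1)=x_2(y_2+z_2)$ realised exactly as an incidence, the maximal collinear count verified to be $\max\{|X|,\min\{|Y|,|Z|\}\}$, and the weights eliminated --- the proof is not there. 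The honest course here is to do what the paper does and cite \cite{KohMirPhaShe} for this lemma.
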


We note that Koh et al. actually prove a more general statement than this version, allowing $f$ to be any `non-degenerate' quadratic polynomial.  

\begin{proof}[Proof of Lemma~\ref{lem:E4+E2*}]
Without loss of generality, we assume $0\notin A$.
We apply Lemma~\ref{lem:CandB} choosing $k=4$ to obtain sets $C\subseteq B \subseteq A$ with $|C|\gtrsim |B|\gg |A|$.
By a dyadic pigeonholing argument, we assume that $\E_4(B)\approx |D|t^4$, and from Lemma~\ref{lem:CandB}, we have
\[
r_{D+B}(c)\approx \frac{|D|t}{|A|} ~\quad \forall c \in C\,.
\]

Consider now $\E^\times(C,U)$. Let $U' = U \setminus\{0\}$ and $D' = D \setminus\{0\}$. We have 
\begin{align*}
\E^\times(C,U) %&= |\{(c_1,c_2,u_1,u_2) \in C^2\times U^2: c_1u_1 = c_2u_2\}|\\
&= |\{(c_1,c_2,u_1,u_2) \in C^2\times U'^2: c_1u_1 = c_2u_2\}| + |C|^2
\\
&\lesssim \frac{|A|^2}{|D|^2t^2}|\{(b_1,b_2,d_1,d_2u_1,u_2) \in B^2\times D^2\times U'^2: (d_1 + b_1)u_1 = (d_2 + b_2)u_2\}|\\
&\leq \frac{|A|^2}{|D|^2t^2}\left(|\{(b_1,b_2,d_1,d_2u_1,u_2) \in B^2\times D'^2\times U'^2: (d_1 + b_1)u_1 = (d_2 + b_2)u_2\}| \right.\\
&\left.\quad +
2\frac{|B|^2|U|^2|D|}{\max\{|B|,|U|,|D|\}}
+ |B||U|\min\{|B|,|U|\}\right)
\\
&\ll \frac{|A|^2}{|D|^2t^2}\left( |D|^{3/2}|B|^{3/2}|U|^{3/2} + \max\{|U|,\min\{|D|,|B\}\}) |U||D||B| 
+ \frac{|D||B|^2|U|^2}{\max\{|D|,|B|,|U|\}} \right)
\end{align*}
where the final sum is to account for the possibility that  $0 \in D$.
A case analysis shows that the final term is always smaller than the second term 
and so 
\begin{align*}
\E^\times(C,U) &\lesssim\frac{|A|^2}{|D|^2t^2}\left( |D|^{3/2}|B|^{3/2}|U|^{3/2} + \max\{|U|,\min\{|D|,|B|\}\} |U||D||B| \right)\\
&:=  \frac{|A|^2}{|D|^2t^2}\left( |D|^{3/2}|B|^{3/2}|U|^{3/2} + M |U||D||B| \right) \,. 
\end{align*}
We claim that $|D|^{3/2}|B|^{3/2}|U|^{3/2} > M |U||D||B|$ to complete the proof. Indeed, if this is not the case, then we will show that either we obtain a contradiction, or else we are done by using the trivial estimate: $\E_4(B)\E^\times(C,U)^2 \leq |D||B|^4|C|^2|U|^2\min\{|C|,|U|\}^2 $.

{\bf{Case 1: $M = |U|$:}} Then  $|D|^{3/2}|B|^{3/2}|U|^{3/2} < M |U||D||B|$ implies that $|U|> |D||B|$ and so using the trivial estimate we have 
\[
\E_4(B)\E^\times(C,U)^2 \leq |D||B|^8|U|^2 < |B|^7|U|^3\,.\]

{\bf{Case 2: $M = |B|$:}} This can only happen if $|D|>|A|>|U|$.
Then  $|D|^{3/2}|B|^{3/2}|U|^{3/2} < M |U||D||B|$ implies that $|B|> |D||U|$ and so using the trivial estimate we have 
\[
\E_4(B)\E^\times(C,U)^2 \leq |D||B|^6|U|^4 < |B|^7|U|^3\,.\]

{\bf{Case 3: $M = |D|$:}} This can only happen if $|B|>|U|>|D|$. Then  $|D|^{3/2}|B|^{3/2}|U|^{3/2} < M |U||D||B|$ implies that $|D|> |B||U|$. On the other hand, $|B||U|>|D|$ and so we reach a contradiction.

Finally, we justify our application of Lemma~\ref{lem:KMPS}. This follows from $|A||D||U|\leq |A||A-A||U|\ll p^2$.
\end{proof}
\subsection{Energy Bounds II} 
We recall \cite[Theorem~2.1]{RudShaShk}, which is a consequence of a point-line incidence bound of Stevens and de~Zeeuw.
\begin{lemma}\label{lem:SdZCP}
Let $A, B, C, D\subset \F_p$. If $|A||B||C||D|^2\ll p^4$, then
\[
|\{(a, b, c, d)\in A\times B\times C\times D: c= ab+d\}|\ll (|A||B||C|)^{3/4}|D|^{1/2} +|A||D| + |B||C|.
\]
\end{lemma}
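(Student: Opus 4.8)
The plan is to read the left-hand side as a count of point--line incidences in $\F_p^2$ and then invoke the Stevens--de~Zeeuw incidence theorem \cite{SdZ}. The one genuine choice is \emph{how} to split the four sets between "points" and "lines"; the right choice is the one that places $D$ on the long side of a Cartesian-product point set, so that $|D|$ picks up the exponent $1/2$ rather than $3/4$.

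First I would rewrite the defining relation $c = ab+d$ as $d = -b\cdot a + c$. Thus, for each pair $(b,c)\in B\times C$, the set of $(a,d)$ satisfying the equation is the line $\ell_{b,c} := \{(x,y)\in\F_p^2 : y = -bx + c\}$, and the quantity we must bound is exactly the number of incidences $I(\mathcal P,\mathcal L)$ between the Cartesian point set $\mathcal P := A\times D$ (so $|\mathcal P| = |A||D|$) and the line set $\mathcal L := \{\ell_{b,c} : (b,c)\in B\times C\}$. Since $\ell_{b,c}$ has slope $-b$ and $y$-intercept $c$, the map $(b,c)\mapsto\ell_{b,c}$ is injective and $|\mathcal L| = |B||C|$.

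Next I would apply the Stevens--de~Zeeuw bound to the pair $(\mathcal P,\mathcal L)$. Writing $\{X,Y\}=\{A,D\}$ with $|X|\le|Y|$, that theorem gives
\[
I(\mathcal P,\mathcal L)\ll |X|^{3/4}|Y|^{1/2}|\mathcal L|^{3/4} + |\mathcal P| + |\mathcal L|,
\]
and the hypothesis $|A||B||C||D|^2\ll p^4$ is exactly what guarantees that the smallness conditions on $|\mathcal P|$ and $|\mathcal L|$ relative to $p$ required by that theorem are met (in the degenerate regime where $\mathcal L$ has fewer than $\max\{|A|,|D|\}$ lines, the trivial bound $I(\mathcal P,\mathcal L)\le |\mathcal L|\min\{|A|,|D|\}$ already suffices, so we may assume we are in the range of the incidence theorem). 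The two error terms are $|A||D|$ and $|B||C|$, already present in the claimed estimate. For the main term: if $|A|\le|D|$ then $|X|^{3/4}|Y|^{1/2}=|A|^{3/4}|D|^{1/2}$; if instead $|D|<|A|$ then $|X|^{3/4}|Y|^{1/2}=|D|^{3/4}|A|^{1/2}=|A|^{3/4}|D|^{1/2}\bigl(|D|/|A|\bigr)^{1/4}\le |A|^{3/4}|D|^{1/2}$. Either way $|X|^{3/4}|Y|^{1/2}|\mathcal L|^{3/4}\le (|A||B||C|)^{3/4}|D|^{1/2}$, which is the asserted bound.

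The only step that requires care, and which one can easily get wrong, is the pairing in the first paragraph: taking the point set to be $B\times C$ instead of $A\times D$ would force the exponent $3/4$ onto $|D|$ and produce a strictly weaker inequality whenever $|D|$ is large, so it is essential that $D$ be combined with $A$ on the point side. Checking that the stated $p$-constraint matches the hypotheses of the Stevens--de~Zeeuw theorem, and handling the small-$|\mathcal L|$ corner case, are routine.
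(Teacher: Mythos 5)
Your reduction to point--line incidences is the right one, and it is essentially how this statement is obtained in the literature: the paper itself gives no proof but quotes it as \cite[Theorem~2.1]{RudShaShk}, which is derived from the Stevens--de~Zeeuw bound exactly via the pairing you chose (points $A\times D$, lines $\ell_{b,c}$ indexed by $B\times C$; this is also what forces the error terms $|A||D|+|B||C|$ rather than $|B||D|+|A||C|$). Your bookkeeping for the main term, including the observation that the case $|D|<|A|$ only improves the bound, is correct.

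The gap is in the characteristic-$p$ condition, which you dismiss as routine. The Stevens--de~Zeeuw theorem for a Cartesian point set $X\times Y$ with $|X|\le |Y|$ and a line set $\mathcal{L}$ requires (in positive characteristic) a constraint of the form $|X||\mathcal{L}|\ll p^2$, i.e.\ here $\min\{|A|,|D|\}\,|B||C|\ll p^2$. This is \emph{not} implied by the stated hypothesis $|A||B||C||D|^2\ll p^4$: take for instance $|A|=|B|=|C|=p^{9/10}$ and $|D|=p^{3/5}$, so that $|A||B||C||D|^2= p^{3.9}\ll p^4$ while $\min\{|A|,|D|\}|B||C|=p^{2.4}\gg p^2$. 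In that regime your single application of Stevens--de~Zeeuw is not justified, your degenerate-case remark (which concerns only $|\mathcal{L}|<\max\{|A|,|D|\}$) does not apply, and the trivial bounds $|A||B||D|$, $|B||C||D|$ do not fall below the claimed right-hand side, so the lemma is not yet proved under its stated hypothesis. To close this you need an extra argument in the range $\min\{|A|,|D|\}|B||C|\gg p^2\gg(|A||B||C|)^{1/2}|D|$ --- for example, partition the line set $B\times C$ into $O\bigl(\min\{|A|,|D|\}|B||C|/p^2\bigr)$ blocks each satisfying the incidence theorem's constraint and sum the resulting bounds, using the hypothesis $(|A||B||C|)^{1/2}|D|\ll p^2$ to absorb the loss --- or simply follow the case analysis in \cite{RudShaShk}. (A very minor further point: the trivial estimate $I\le|\mathcal{L}|\min\{|A|,|D|\}$ fails for the horizontal lines coming from $b=0$; their contribution must be bounded separately, by $|A||D|$.)
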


\begin{lemma}\label{lem:E4*E4+}
Let $A, U\subset \F_p$. There exist $C\subset B\subset A$, with $|C|\gtrsim |B|\gg |A|$ such that, assuming $|A/A||A||A - U||U|^2\ll p^4$,
\[
\E^\times_4(B) \E_4(C,U)\lesssim |A|^7 |U|^2.
\]
\end{lemma}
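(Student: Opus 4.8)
The plan is to run the proof of Lemma~\ref{lem:E4+E2*} multiplicatively, invoking Lemma~\ref{lem:SdZCP} in place of Lemma~\ref{lem:KMPS}. The one structural novelty is that, since we must now control the \emph{fourth} energy $\E_4(C,U)$ rather than a second energy, an extra dyadic pigeonholing step has to be inserted before the incidence bound can be brought to bear.

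First, assume as usual that $0\notin A$ and apply Lemma~\ref{lem:CandB} to the multiplicative group $\F^*$ with $k=4$. This produces $C\subseteq B\subseteq A$ with $|C|\gtrsim|B|\gg|A|$, a set $D:=S_\tau\subseteq B/B$ and a number $t:=\tau>0$ such that, writing $\rho:=|D|t/|A|$ and $r_{D\cdot B}(x):=|\{(d,b)\in D\times B:db=x\}|$, we have $\E_4^\times(B)\approx|D|t^4$ and $r_{D\cdot B}(c)\approx\rho$ for all $c\in C$. Since every $c\in C$ has $\gtrsim\rho$ representations as a product $db$ with $d\in D$, $b\in B$, expanding all four copies of $C$ in $\E_4(C,U)=\sum_{x\in C-U}r_{C-U}(x)^4$ and noting that the common value $d_1b_1-u_1=\cdots=d_4b_4-u_4$ necessarily lies in $C-U$ gives
\[
\rho^4\,\E_4(C,U)\ \lesssim\ N:=\sum_{x\in C-U}r_{D\cdot B-U}(x)^4,\qquad r_{D\cdot B-U}(x):=|\{(d,b,u)\in D\times B\times U:db-u=x\}|.
\]
Substituting $\E_4^\times(B)\approx|D|t^4$ and $\rho=|D|t/|A|$, and using $|B|\leq|A|$, the lemma is reduced to proving $N\lesssim(|D||B|)^3|U|^2$.

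To get this, apply a dyadic pigeonholing argument to $\sum_{x\in C-U}r_{D\cdot B-U}(x)^4$: there are $P\subseteq C-U$ and $\theta\geq1$ with $r_{D\cdot B-U}(x)\sim\theta$ on $P$ and $N\lesssim|P|\theta^4$. The crucial step is then
\[
|P|\theta\ \approx\ \sum_{x\in P}r_{D\cdot B-U}(x)\ =\ |\{(d,b,u)\in D\times B\times U:db-u\in P\}|,
\]
and rewriting $db-u=d\cdot b+(-u)$ exhibits the right-hand side as exactly a count of the type bounded by Lemma~\ref{lem:SdZCP}, with the sets $(A,B,C,D)$ there played by $(D,B,P,-U)$; the hypothesis is met because $|D|\leq|A/A|$, $|B|\leq|A|$, $|P|\leq|C-U|\leq|A-U|$, so $|D||B||P||U|^2\leq|A/A||A||A-U||U|^2\ll p^4$. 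Hence $|P|\theta\ll(|D||B||P|)^{3/4}|U|^{1/2}+|D||U|+|B||P|$. If the first term dominates, then $|P|^{1/4}\theta\ll(|D||B|)^{3/4}|U|^{1/2}$, whence $N\lesssim|P|\theta^4=(|P|^{1/4}\theta)^4\ll(|D||B|)^3|U|^2$, exactly as required. Feeding the entire dyadic level set $P$ (rather than a single point) into Lemma~\ref{lem:SdZCP} is precisely what produces the factor $|P|^{3/4}$ that makes the exponents close.

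It remains to handle the two secondary terms of Lemma~\ref{lem:SdZCP} and the degenerate configurations, which I would treat in the spirit of Lemma~\ref{lem:E4+E2*}: when $|D||U|$ dominates, bound $N\lesssim|D||U|\theta^3$ and use $\theta\leq|B||U|$ or $\theta\leq|D||B|$ according as $|U|\leq|D|$ or $|U|\geq|D|$; when $|B||P|$ dominates, or in small-set degeneracies, bound $\E_4^\times(B)\,\E_4(C,U)$ directly by crude estimates such as $\E_4^\times(B)\leq|B|^5$ and $\E_4(C,U)\leq|C||U|\min\{|C|,|U|\}^3$, together with $|P|\theta\leq|D||B||U|$ and $r_{D\cdot B-U}(x)\leq\min\{|D||B|,|D||U|,|B||U|\}$. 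The main obstacle I anticipate is exactly this closing case analysis — verifying that the principal term of Lemma~\ref{lem:SdZCP} governs $N$ outside the ranges where the trivial bounds already suffice; everything preceding it is a routine transcription of the proof of Lemma~\ref{lem:E4+E2*}.
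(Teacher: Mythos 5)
Your plan is essentially the paper's argument with one structural reshuffle. The paper first pigeonholes $\E_4(C,U)\approx |D_t|t^4$ on a level set $D_t\subseteq C-U$ of $r_{C-U}$, and only then replaces a \emph{single} copy of $C$ by $S_\tau\cdot B$ inside the first-moment count $|D_t|t$, paying one factor of $\rho^{-1}$; you replace all four copies at once, paying $\rho^{-4}$, and pigeonhole the expanded representation function $r_{D\cdot B-U}$ instead. Both routes end up feeding the same count $|\{(d,b,u)\in S_\tau\times B\times U\colon db-u\in P\}|$ into Lemma~\ref{lem:SdZCP}, with the identical justification of the $p$-constraint via $|S_\tau|\le |A/A|$, $|P|\le|A-U|$, and in both the leading term gives $|A|^4|B|^3|U|^2\le |A|^7|U|^2$. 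The only real divergence is in how the two secondary terms of Lemma~\ref{lem:SdZCP} are handled: yours through $\theta$ and $|P|\theta$, the paper's through the parameters $\tau$ and $t$ (using $|S_\tau|\tau\le|B|^2$, $\tau\le|A|$, $t\le\min\{|A|,|U|\}$).

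The step you flagged as the anticipated obstacle does close, and with exactly the inequalities you listed, so there is no gap in substance --- but note that the purely trivial bounds $\E_4^\times(B)\le|B|^5$ and $\E_4(C,U)\le|C||U|\min\{|C|,|U|\}^3$ on their own do \emph{not} suffice, so you do need the $|P|\theta$ and $\theta$ estimates. Concretely, with target $N\lesssim |D|^3|B|^3|U|^2$: your treatment of the case where $|D||U|$ dominates works as stated. If $|B||P|$ dominates, then $\theta\lesssim |B|$, and writing $N\lesssim(|P|\theta)\theta^3$ with $|P|\theta\le |D||B||U|$, split on the size of $|B|$: if $|B|\le |D|^2|U|$, use $\theta\lesssim|B|$ to get $N\lesssim |D||B|^4|U|\le |D|^3|B|^3|U|^2$; if $|B|>|D|^2|U|$, then $|B|^2>|D|^4|U|^2\ge |D||U|^2$, and using instead $\theta\le |D||U|$ gives $N\lesssim |D|^4|B||U|^4\le |D|^3|B|^3|U|^2$. (For the bounds $\theta\le\min\{|D||U|,|B||U|\}$ you need $0\notin D$ and $0\notin B$, which is where your assumption $0\notin A$ enters.)
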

\begin{proof}

We begin by applying Lemma~\ref{lem:CandB} to $A$ in its multiplicative form to obtain sets $C\subseteq B \subseteq A$ so that we have $\E_4^\times(B) \sim |S_\tau|\tau^4$ and  $r_{S_\tau B}(c) \approx |S_\tau|\tau|A|^{-1}$ for each $c\in C$. Note that $\tau \leq |A|$.

By a dyadic pigeonhole argument, we extract $D_t\subseteq C-U$ so that for some $1\leq t \leq \min\{|C|,|U|\}$ we have $\E_4(C,U) \sim |D_t|t^4$.

By Lemma~\ref{lem:SdZCP} we have
\begin{align*}
    |D_t|t &\leq |\{(d,c,u)\in D_t\times C\times U \colon d = c -u\}|\\
    &\approx     \frac{|A| }{|S_\tau|\tau}|\{(d,s,b,u)\in D_t \times S_\tau\times B\times U \colon  d =sb - u\}|\\
    &\ll  \frac{|A| }{|S_\tau|\tau} \left( (|D_t||B||S_\tau|)^{3/4}|U|^{1/2} +
    |S_\tau||U| + |B||D_t|
    \right)\,.
\end{align*}

If the first term dominates then rearranging yields the desired bound. 

Suppose instead that the second term dominates so that 
$|D_t|t ~\tau \lesssim |A||U|$.
Then 
\[
\E_4^\times(B)\E_4(C,U) \sim |S_\tau|\tau^4 |D_t|t^4  =
(|S_\tau|\tau) \tau^2 (\tau ~|D_t|t) t^3
\lesssim (|B|^2) |B|^2 (|A||U|)(|A|^2|U|) \leq |A|^7 |U|^2\,.
\]

Now suppose that the final term dominates, so that $|S_\tau|\tau~ t \lesssim |A|^2$.
Then using that $t\leq \min\{|A|,|U|\}$ we have
\[
\E_4^\times(B)\E_4(C,U) \sim |S_\tau|\tau^4 |D_t|t^4  
= \tau^3 (|S_\tau|\tau ~t) (|D_t|t) t^2 
\lesssim |U|^2 |A| |A|^2 |B|^2 |B|^2 \leq |U|^2 |A|^7 \,.
\] 
It remains to justify the $p$-constraint for the application of Lemma~\ref{lem:SdZCP}. \\
We require that $|S_\tau||D_\tau||B||U|^2 \ll p^4$. Since $S_\tau \subseteq A/A$ and $D_\tau \subseteq A-U$, the hypothesis $|A/A||A-U||A||U|^2 \ll p^4$ renders this application valid. 
\end{proof}

We also record the converse analogue of Lemma~\ref{lem:E4*E4+} whose proof follows almost identically, swapping each instance of addition with multiplication. 
\begin{lemma}\label{lem:E4+E4*}
Let $A, U\subset \F_p$. There exist $C\subset B\subset A$, with $|C|\gtrsim |B|\gg |A|$ such that, assuming $|A-A||A||A/U||U|^2\ll p^4$,
\[
\E_4(B)\E_4^\times(C,U)\lesssim |A|^7 |U|^2.
\]
\end{lemma}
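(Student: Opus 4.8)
The plan is to obtain Lemma~\ref{lem:E4+E4*} by running the proof of Lemma~\ref{lem:E4*E4+} verbatim after interchanging the roles of addition and multiplication at every step. Concretely, I would begin by applying Lemma~\ref{lem:CandB} to $A$ in its \emph{additive} form (rather than its multiplicative form), producing sets $C\subseteq B\subseteq A$ with $|C|\gtrsim|B|\gg|A|$, a set $S_\tau\subseteq B-B$ and a parameter $\tau\le|A|$ satisfying $\E_4(B)\sim|S_\tau|\tau^4$ and $r_{S_\tau+B}(c)\approx|S_\tau|\tau|A|^{-1}$ for every $c\in C$. Then, by a dyadic pigeonhole argument on $\E_4^\times(C,U)=\sum_x r_{C/U}^4(x)$, I extract $D_t\subseteq C/U$ and some $1\le t\le\min\{|C|,|U|\}$ with $\E_4^\times(C,U)\sim|D_t|t^4$.

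The core step is the incidence input. In Lemma~\ref{lem:E4*E4+} one writes $|D_t|t$ as a count of solutions to $d=c-u$ with $(d,c,u)\in D_t\times C\times U$, expands each $c$ using the $S_\tau B$ representation, and applies the point--line incidence consequence Lemma~\ref{lem:SdZCP} to the affine relation $d=sb-u$. Here I would instead write $|D_t|t$ as a count of solutions to $d=c/u$, i.e.\ $du=c$, expand $c$ using $c=s+b$ with $s\in S_\tau$, $b\in B$, $r_{S_\tau+B}(c)\approx|S_\tau|\tau|A|^{-1}$, and thereby reduce to counting $(d,s,b,u)$ with $du=s+b$, equivalently $s=du-b$. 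This is again a four-variable relation of the shape $z=xy+w$ (with $x=d$, $y=u$, $w=-b$, $z=s$), so Lemma~\ref{lem:SdZCP} applies and gives
\[
|D_t|t\ll\frac{|A|}{|S_\tau|\tau}\left((|D_t||U||S_\tau|)^{3/4}|B|^{1/2}+|D_t||U|+|S_\tau||B|\right),
\]
up to logarithmic factors. As in the original proof, if the first term dominates, rearranging (using $\E_4(B)\sim|S_\tau|\tau^4$, $\E_4^\times(C,U)\sim|D_t|t^4$, and $|S_\tau|\le|B|^2$, $\tau\le|A|$, $t\le\min\{|A|,|U|\}$) yields $\E_4(B)\E_4^\times(C,U)\lesssim|A|^7|U|^2$; if either of the other two terms dominates, one gets a bound of the form $|D_t|t\,\tau\lesssim|A||U|$ or $|S_\tau|\tau\,t\lesssim|A|^2$, and then the same factorisations $|S_\tau|\tau^4|D_t|t^4=(|S_\tau|\tau)\tau^2(\tau|D_t|t)t^3$ and $\tau^3(|S_\tau|\tau t)(|D_t|t)t^2$ used in Lemma~\ref{lem:E4*E4+} deliver the claim.

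The remaining point is the $p$-constraint: the application of Lemma~\ref{lem:SdZCP} to the variables $(D_t,U,S_\tau,B)$ requires $|D_t||U||S_\tau||B|^2\ll p^4$, and since $D_t\subseteq C/U\subseteq A/U$ and $S_\tau\subseteq B-B\subseteq A-A$ while $|B|\le|A|$, this is guaranteed by the hypothesis $|A-A||A||A/U||U|^2\ll p^4$. I do not expect a genuine obstacle here — the argument is truly symmetric under swapping $(+,\times)$ — the only care needed is bookkeeping: making sure the relation $du=s+b$ is presented in the exact $z=xy+w$ form demanded by Lemma~\ref{lem:SdZCP}, and that the roles of $B$ and $U$ in that lemma are assigned so that the case analysis (and hence the final exponents $|A|^7|U|^2$) comes out identically to Lemma~\ref{lem:E4*E4+}.
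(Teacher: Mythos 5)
Your overall plan --- rerunning the proof of Lemma~\ref{lem:E4*E4+} with $+$ and $\times$ interchanged --- is what the paper intends, but the incidence step does not swap the way you claim, and this is a genuine gap. After the substitutions the equation to be counted is $du=s+b$ with $d\in D_t\subseteq A/U$, $u\in U$, $s\in S_\tau\subseteq A-A$, $b\in B$. In Lemma~\ref{lem:SdZCP} the distinguished set (exponent $1/2$ in the main term, squared in the $p$-constraint) must be the additive shift in $c=ab+d$; since $u$ occurs only multiplicatively in $du=s+b$, no assignment of roles can put $U$ in that slot --- the shift is forced to be $-b$ or $-s$. With your assignment the correct output of Lemma~\ref{lem:SdZCP} is
\[
|D_t|t \lesssim \frac{|A|}{|S_\tau|\tau}\Bigl((|D_t||U||S_\tau|)^{3/4}|B|^{1/2} + |D_t||B| + |U||S_\tau|\Bigr)
\]
(your two secondary terms are mis-stated, though the conclusions $|D_t|t\,\tau\lesssim|A||U|$ and $|S_\tau|\tau\,t\lesssim|A|^2$ you then use are exactly what the correct secondary terms give, so those cases survive). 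The damage is elsewhere: (a) the main term rearranges to $\E_4(B)\E_4^\times(C,U)\sim|S_\tau|\tau^4|D_t|t^4\lesssim |A|^4|U|^3|B|^2\leq |A|^6|U|^3$, which is $\leq|A|^7|U|^2$ only when $|U|\lesssim|A|$, whereas in the paper's application $U=\mathcal{E}_2\subseteq A_8/\mathcal{F}_2$ is typically much larger than $A$; and (b) your application of Lemma~\ref{lem:SdZCP} needs $|D_t||U||S_\tau||B|^2\ll p^4$, i.e.\ essentially $|A/U||U||A-A||A|^2\ll p^4$, which the stated hypothesis $|A-A||A||A/U||U|^2\ll p^4$ only implies when $|U|\gtrsim|A|$. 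So outside the regime $|U|\approx|A|$ your argument does not deliver the lemma as stated; choosing the other admissible assignment (shift $=-s$) only moves the problem, giving $|S_\tau|^{1/2}$ in the main term and $|S_\tau|^2$ in the constraint.

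The repair is to go one level below Lemma~\ref{lem:SdZCP}: the solutions of $b=du-s$ are incidences between the Cartesian product point set $U\times B$ (points $(u,b)$) and the $|D_t||S_\tau|$ lines $y=dx-s$, and the Stevens--de Zeeuw bound allows the exponent $1/2$ to be placed on either coordinate of the product. Placing it on the $U$-coordinate yields the main term $(|D_t||S_\tau||B|)^{3/4}|U|^{1/2}$ together with a constraint of the form $|D_t||S_\tau||B||U|^2\ll p^4$; equivalently, you need the companion version of Lemma~\ref{lem:SdZCP} in which the roles of the $x$-coordinate set and the shift set are exchanged. With that version the computation really is word-for-word that of Lemma~\ref{lem:E4*E4+}: the main term gives $|A|^4|B|^3|U|^2\leq|A|^7|U|^2$, the secondary terms give the two inequalities you quoted, and the required $p$-constraint is exactly the hypothesis $|A-A||A||A/U||U|^2\ll p^4$. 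As written, relying on Lemma~\ref{lem:SdZCP} verbatim, your proof establishes only a weaker statement than the one claimed.
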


\section{Arguments of Rudnev, Shakan and Shkredov}
We extract the following proposition and proof from the arguments of Rudnev, Shakan and Shkredov \cite{RudShaShk}. 
\begin{proposition}\label{prop:RSS argument additive}
Given $A\subset \F_p$, there exists a set $B\subseteq A$ with $|B|\gg |A|$ so that 
\[
\E_{4/3}(B)^3 \lesssim \frac{|A+A|^8 \E_4(A)^2 \E_4(A,\mathcal{E})\mu^4 \nu^4 }{|A|^{24}}
\]
where $\E_{4/3}(B)\approx |\mathcal{F}|\nu^{4/3}$ for a set $\mathcal{F}\subseteq B-B$ and a number $\nu\geq 1$ so that $r_{B-B}(f)\in [\nu,2\nu)$ for all $f\in \mathcal{F}$. Moreover, there exists $\mathcal{E} \subseteq A-\mathcal{F}$ and $\mu\geq 1$ so that $\E(A,\mathcal{F})\approx |\mathcal{E}|\mu^2$ for  so that $r_{A- \mathcal{F}}(e) \in[\mu,2\mu)$, for all $e\in \mathcal{E}$.
\end{proposition}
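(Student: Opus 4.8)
The plan is to reorganise the double-counting argument of Rudnev, Shakan and Shkredov \cite{RudShaShk}, keeping the auxiliary sets explicit so that they can be fed into the mixed-energy estimates later. First I would apply the regularisation Lemma~\ref{lem:regu}, with exponent $s=4/3$, to pass to a subset $B\subseteq A$ with $|B|\gg|A|$ adapted to the procedures used downstream, and then run the two dyadic pigeonholing steps of the preliminaries: on $\E_{4/3}(B)$, producing $\mathcal F\subseteq B-B$ and $\nu\ge1$ with $r_{B-B}(f)\in[\nu,2\nu)$ for $f\in\mathcal F$ and $\E_{4/3}(B)\approx|\mathcal F|\nu^{4/3}$; and on $\E(A,\mathcal F)$, producing $\mathcal E\subseteq A-\mathcal F$ and $\mu\ge1$ with $r_{A-\mathcal F}(e)\in[\mu,2\mu)$ for $e\in\mathcal E$ and $\E(A,\mathcal F)\approx|\mathcal E|\mu^2$. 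Since $\E_{4/3}(B)^3\approx|\mathcal F|^3\nu^4$, the asserted inequality is, after cancelling $\nu^4$ and up to the logarithmic factors in $\lesssim$, equivalent to
\[
|\mathcal F|^3\,|A|^{24}\lesssim |A+A|^8\,\E_4(A)^2\,\E_4(A,\mathcal E)\,\mu^4\,,
\]
which is what I would establish.

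The substance is a double count. Two elementary inputs come first. Discarding all but the differences lying in $\mathcal F$ in $\sum_s r_{B-B}(s)^4$ gives $\E_4(B)\gtrsim|\mathcal F|\nu^4$, hence $|\mathcal F|^3\nu^4\lesssim|\mathcal F|^2\E_4(B)\le|\mathcal F|^2\E_4(A)$; so one of the two factors $\E_4(A)$ is already in hand and it remains to bound $|\mathcal F|^2\,|A|^{24}$ by $|A+A|^8\E_4(A)\E_4(A,\mathcal E)\mu^4\nu^4$. Second, unwinding popularity, for every $e\in\mathcal E$ one has $r_{A-B+B}(e)\gtrsim\mu\nu$, since $e=a-f=a-b+b'$ can be assembled from $\gtrsim\mu$ choices of a pair $(a,f)\in A\times\mathcal F$ and $\gtrsim\nu$ choices of $(b,b')\in B\times B$ with $b-b'=f$. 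I would then double-count, with the multiplicities $r_{A-\mathcal F}$ and $r_{B-B}$, the solutions to a tautological identity of the shape
\[
(a_1-f_1)+(a_2-f_2)=(a_1-f_2)+(a_2-f_1),\qquad a_1,a_2\in A,\ f_1,f_2\in\mathcal F,
\]
restricted so that both left-hand blocks $a_1-f_1$ and $a_2-f_2$ lie in $\mathcal E$. Evaluating this count ``diagonally'' (factoring over the two blocks) controls it in terms of $|\mathcal E|$, $\mu$, $\nu$, and hence, via $\E(A,\mathcal F)\approx|\mathcal E|\mu^2$ and $\E_{4/3}(B)\approx|\mathcal F|\nu^{4/3}$, in terms of $|\mathcal F|$, $\mu$, $\nu$. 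Evaluating it instead by first freezing the ``crossed'' sums $a_1-f_2$ and $a_2-f_1$ and applying the Cauchy--Schwarz inequality peels off a factor $\E_4(A,\mathcal E)$ from the block whose value is pinned inside $\mathcal E$, and a second Cauchy--Schwarz, decoupling the two copies of $A$, produces the remaining factor $\E_4(A)$. The index sets over which these Cauchy--Schwarz steps range sit inside an iterated sum-set of the shape $2A-2A$ (using $\mathcal E\subseteq A-\mathcal F\subseteq A-(B-B)\subseteq 2A-A$), so estimating their sizes by the Pl\"unnecke--Ruzsa inequality, $|2A-2A|\le|A+A|^4/|A|^3$ (Lemma~\ref{lem:PRI}), applied to the two such sum-sets that appear, contributes exactly the factor $|A+A|^8$ together with the matching power of $|A|$. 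Comparing the two evaluations and combining with the first elementary input yields the proposition.

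I expect the main difficulty to be the bookkeeping inside the double count: one must channel the support sets so that \emph{exactly} two copies of $\E_4$, one copy of $\E_4(\cdot,\mathcal E)$, the weights $\mu^4\nu^4$, and a single occurrence of $2A-2A$ appear — no more — and the two Cauchy--Schwarz steps must be ordered so that no spurious extra energy or sum-set factor creeps in, with all remaining powers of $|A|$ collapsing to $|A|^{24}$. There is no characteristic constraint at this stage, as no incidence bound is used; the point of recording $\mathcal F$, $\mathcal E$, $\mu$, $\nu$ explicitly is that the mixed-energy lemmas (Lemmas~\ref{lem:E4+E2*}--\ref{lem:E4+E4*}) are subsequently applied with $U=\mathcal E$.
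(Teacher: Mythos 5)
Your preparatory steps (regularise, pigeonhole $\E_{4/3}(B)$ to obtain $\mathcal F,\nu$, pigeonhole $\E(A,\mathcal F)$ to obtain $\mathcal E,\mu$, and reduce the claim to $|\mathcal F|^2|A|^{24}\lesssim |A+A|^8\,\E_4(A)\,\E_4(A,\mathcal E)\,\mu^4\nu^4$ via $|\mathcal F|\nu^4\le\E_4(A)$) are consistent with the statement, but the heart of the proposition is precisely the step you leave unexecuted. Two concrete problems. First, Lemma~\ref{lem:regu} is invoked but plays no role in your argument: you specify no rule $\mathcal R_\epsilon$ and never use the property $\E_{4/3}(\mathcal R_\epsilon(B))\gg\E_{4/3}(B)$. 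In the paper's proof this lemma is applied with the popular-sum rule $\mathcal R_\epsilon(A)=\{a: |\{b: a+b\in P_A\}|\ge\frac23|A|\}$, and that is what supplies the lower bound $\gtrsim |D|t|A|^2$ for the number of solutions of the tautological equation $a-b=(a+c)-(b+c)=(a+d)-(b+d)$ with all four sums popular; this is the source of the gain in powers of $|A|$, and your scheme has no substitute for it — your ``diagonal'' evaluation of your count is just the definition of the dyadic classes and contains no such gain. Second, your mechanism for producing $|A+A|^8$ is genuinely different from the paper's and is not verified: in the paper the factor arises as $(|A+A|/|A|^2)^4$ (squared, $|A+A|^8/|A|^{16}$) from replacing each of four popular sums by its $\gtrsim |A|^2/|A+A|$ representations in $B+B$, whereas you propose two Cauchy--Schwarz steps over index sets inside $2A-2A$ with Pl\"unnecke--Ruzsa, which at face value yields only $|2A-2A|^2\le |A+A|^8/|A|^{6}$ — a deficit of many powers of $|A|$ relative to the paper's bookkeeping that you do not show how to recover. (In the paper, Lemma~\ref{lem:PRI} enters only later, to justify the $p$-constraints, not in this proposition.)

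Since the stated inequality is tight up to logarithms (when $A$ is an arithmetic progression every displayed quantity balances exactly), there is no slack to absorb such losses, and the ``channelling'' of exactly two copies of $\E_4$, one $\E_4(\cdot,\mathcal E)$, the weights $\mu^4\nu^4$ and the power $|A|^{24}$ — which you yourself flag as the main difficulty and defer — is the entire content of the result. The paper's route, which you would need to follow or genuinely replace, is: count solutions of the tautological equation over the regularised set with $a-b\in D$ and all sums in $P$; bound the count above by $\sqrt{\E_4(B)}$ times the square root of the number of popular-sum quadruples sharing a difference in $D$ (Cauchy--Schwarz over equivalence classes); convert popular sums back to elements of $B$ at cost $(|A+A|/|A|^2)^4$; then perform the two dyadic localisations producing $\mathcal F,\nu$ and $\mathcal E,\mu$, and finish with one more Cauchy--Schwarz giving $\E_4(A,\mathcal E)$. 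As written, your proposal replaces this chain with an unverified sketch, so it does not yet constitute a proof.
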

\begin{proof}
We first apply Lemma~\ref{lem:regu} to the set $A$ choosing $s = 4/3$ and using the rule
\[
\mathcal{R}_\epsilon(A):= \{a\in A: |\{b\in A: a+b \in P_A\}| \geq \frac23|A|\}
\]
where $P_A:= \{x \in A+A: r_{A+A}(x) \geq \epsilon\frac{|A|^2}{|A+A|}\}$. This rule refines $A$ according to popular sums and is admissible for Lemma~\ref{lem:regu}. (We could replace this rule with an analogous procedure which refines $A$ according to popular \emph{differences}, which would replace all sum sets with difference sets in the subsequent arguments).

Let $C:= \mathcal{R}_\epsilon(B)$ be the set obtained from $A$ using Lemma~\ref{lem:regu}. Suppose that  $\E_{4/3}(C) \approx |D|t^{4/3}$ by a dyadic pigeonhole argument. Note that 
\begin{equation}\label{eqn:E43CB}
  \E_{4/3}(C)\gg \E_{4/3}(B).  
\end{equation}

Now let us count solutions $(a,b,c,d)\in B^4$ to the following equation 
\begin{equation}\label{e:tautological}
a-b = (a + c ) - (b+c) = (a+d) - (b+d)
\end{equation}
where $a-b\in D$, and $a+c,b+c,a+d,b+d \in P_C$. 

A consequence of our regularisation ensures that we have at least 
$|D|t (2|B|/3)^2 \sim |D|t|A|^2$ solutions. 

On the other hand, using a now-standard technique of counting the number of solutions via equivalence classes (see its origins in \cite{RudShaShk} and its direct analogue over the reals in \cite{RudSte}), we get the upper bound 
\[
\#\{\text{solutions}\}\leq \sqrt{\E_4(B)}\sqrt{|\{(x_1,y_1,x_2,y_2,d)\in P_{C}\times D: d = x_1 - y_1 = x_2 - y_2 \}|}\,.
\]

Clearly $\E_4(B) \leq \E_4(A)$. We then combine the lower and upper bounds for the number of solutions to the tautological equation \eqref{e:tautological} (raised to the power four) and use the popularity of the set $P_C$ to obtain 
\begin{align*}
    |D|^4t^4|A|^8 &\ll \E_4(A)^2|\{(x_1,y_1,x_2,y_2,d)\in P_{C}\times D: d = x_1 - y_1 = x_2 - y_2 \}|^2\\
    &\lesssim \E_4(A)^2 |A+A|^{8}|A|^{-16}\\
    &\quad \cdot
    |\{(a_1,a_2,a_3,a_4,a_5,a_6,a_7,a_8,d)\in B^8 \times D: d =a_1 + a_2 - a_3 - a_4 = a_5+a_6 - a_7 - a_8 \}|^2\\
    &\approx \frac{\E_4(A)^2 |A+A|^{8}}{|A|^{16}} \nu^4
    |\{(a_1,a_2,a_3,a_4,f_1,f_2,d)\in B^4\times \mathcal{F}^2 \times D: d =a_1 + f_1 - a_2 = a_3+f_2 -a_4 \}|^2
\end{align*}
where $\mathcal{F}\subseteq B - B$, $r_{A-A}(f)\in [\nu,2\nu)$ for all $f\in \mathcal{F}$ and $\E_{4/3}(B) \approx |\mathcal{F}|\nu^{4/3}$.

We again dyadically localise, to a set $\mathcal{E} \subseteq A-\mathcal{F}$ so that $r_{A- \mathcal{F}}(e) \in[\mu,2\mu)$, for all $e\in \mathcal{E}$ and $\E(A,\mathcal{F})\approx |\mathcal{E}|\mu^2$.
Thus
\begin{align*}
    |D|^4t^4|A|^8 & \lesssim   \frac{\E_4(A)^2 |A+A|^{8}}{|A|^{16}} \nu^4\mu^4
    |\{(a_1,a_2, e_1,e_2,d)\in B^2\times \mathcal{E}^2 \times D: d =a_1 - e_1 = a_2 - e_2 \}|^2\\
    &=  \frac{\E_4(A)^2 |A+A|^{8}}{|A|^{16}} \nu^4\mu^4 \left(\sum_{d\in D}r_{A-\mathcal{E}}(d)^2\right)^2 \\
    &\leq   \frac{\E_4(A)^2 |A+A|^{8}}{|A|^{16}} \nu^4\mu^4 |D|~\E_4(A,\mathcal{E})\,.
\end{align*}

By rearranging and noting that $\E_{4/3}(C)^3\approx  |D|^3t^4$, we obtain the required inequality.

\end{proof}

We record that we have a multiplicative analogue of Proposition~\ref{prop:RSS argument additive}. The proof is almost identical, and involves merely swapping all instances of addition and multiplication. We can also swap all instances of the product set $AA$ with the ratio set $A/A$. 
\begin{proposition}\label{prop:RSS argument multiplicative}
Let $A\subset \F_p$. There exists a set $B\subseteq A$ with $|B|\gg |A|$ so that 
\[
\E^\times_{4/3}(B)^3 \lesssim \frac{|AA|^8 \E^\times_4(A)^2 \E^\times_4(A,\mathcal{E})\mu^4 \nu^4 }{|A|^{24}}
\]
where $\E_{4/3}^\times(B)\approx |\mathcal{F}|\nu^{4/3}$ for some $\mathcal{F}\subseteq B/B$ and $\nu\geq 1$ so that $r_{B/B}(f)\in [\nu,2\nu)$ for all $f\in \mathcal{F}$. Moreover, there exist $\mathcal{E} \subseteq A/\mathcal{F}$ and $\mu\geq 1$ so that $\E^\times(A,\mathcal{F})\approx |\mathcal{E}|\mu^2$ and $r_{A/ \mathcal{F}}(e) \in[\mu,2\mu)$, for all $e\in \mathcal{E}$.
\end{proposition}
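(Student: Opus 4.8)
\textbf{Proof proposal for Proposition~\ref{prop:RSS argument multiplicative}.}

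The plan is to follow the proof of Proposition~\ref{prop:RSS argument additive} verbatim, replacing every additive operation by its multiplicative counterpart, and noting that none of the steps used any property special to addition. Concretely, I would begin by invoking Lemma~\ref{lem:regu} applied to $A$ with $s=4/3$, but using the \emph{multiplicative} popularity rule
\[
\mathcal{R}_\epsilon(A):=\Bigl\{a\in A: |\{b\in A: ab\in P_A\}|\geq \tfrac23|A|\Bigr\},\qquad
P_A:=\Bigl\{x\in AA: r_{AA}(x)\geq \epsilon\tfrac{|A|^2}{|AA|}\Bigr\}.
\]
Here Lemma~\ref{lem:regu} is stated for additive sets, but $A\subseteq \F_p^*$ is a subset of the multiplicative group, so the lemma applies with the group operation taken to be multiplication (if $0\in A$ one discards it at the cost of a single element). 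This produces $B\subseteq A$ with $|B|\gg|A|$ and $C:=\mathcal{R}_\epsilon(B)$ with $\E_{4/3}^\times(C)\gg \E_{4/3}^\times(B)$, after a dyadic pigeonholing giving $\E_{4/3}^\times(C)\approx |D|t^{4/3}$ with $D\subseteq C/C$.

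Next I would count solutions $(a,b,c,d)\in B^4$ to the multiplicative tautological identity
\[
a/b = (ac)/(bc) = (ad)/(bd),
\]
with $a/b\in D$ and $ac,bc,ad,bd\in P_C$. The regularisation guarantees $\gg |D|t|A|^2$ such solutions from below. For the upper bound one uses the same equivalence-class double-counting as in the additive case, obtaining
\[
\#\{\text{solutions}\}\leq \sqrt{\E_4^\times(B)}\,\sqrt{|\{(x_1,y_1,x_2,y_2,d): d=x_1/y_1=x_2/y_2,\ x_i,y_i\in P_C\}|}.
\]
Using $\E_4^\times(B)\leq \E_4^\times(A)$, raising to the fourth power, and exploiting the popularity bound $|P_C|\gg |A|^2/|AA|$ together with $r_{AA}(x)\ll |AA|/|A|^2\cdot|A|^2=\dots$ — more precisely, replacing each popular product by a product of a bounded number of elements of $B$ at the cost of powers of $|AA|/|A|$ — transports the count to one over $B^8\times D$. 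Then two successive dyadic localisations (first extracting $\mathcal{F}\subseteq B/B$ with $r_{A/A}(f)\in[\nu,2\nu)$ and $\E_{4/3}^\times(B)\approx|\mathcal{F}|\nu^{4/3}$, then $\mathcal{E}\subseteq A/\mathcal{F}$ with $r_{A/\mathcal{F}}(e)\in[\mu,2\mu)$ and $\E^\times(A,\mathcal{F})\approx|\mathcal{E}|\mu^2$), followed by a Cauchy--Schwarz step bounding $\bigl(\sum_{d\in D} r_{A/\mathcal{E}}(d)^2\bigr)^2\leq |D|\,\E_4^\times(A,\mathcal{E})$, yields
\[
|D|^4 t^4 |A|^8 \lesssim \frac{\E_4^\times(A)^2 |AA|^8}{|A|^{16}}\,\nu^4\mu^4\,|D|\,\E_4^\times(A,\mathcal{E}).
\]
Rearranging and using $\E_{4/3}^\times(C)^3\approx |D|^3 t^4 \gg \E_{4/3}^\times(B)^3$ gives the claimed inequality.

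The one point that genuinely requires a remark rather than a mechanical translation is the applicability of the regularisation lemmas (Lemma~\ref{lem:regu}, and implicitly Lemma~\ref{lem:CandB} elsewhere) in the multiplicative setting: these are phrased for additive groups, but $\F_p^*$ is an abelian group under multiplication, so after deleting $0$ from $A$ (which changes cardinalities by at most one and does not affect any of the $\gg,\lesssim$ estimates) every invocation is legitimate. The rest of the argument — the tautological identity, the equivalence-class counting, the dyadic pigeonholing, and the final Cauchy--Schwarz — never distinguishes the group operation, so there is no real obstacle; the work is purely notational. I would therefore simply state that the proof is obtained from that of Proposition~\ref{prop:RSS argument additive} by interchanging $+$ with $\times$ throughout (and, if one prefers, $AA$ with $A/A$), and omit the repetition.
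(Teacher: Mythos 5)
Your proposal is correct and matches the paper's treatment: the paper itself gives no separate argument for Proposition~\ref{prop:RSS argument multiplicative}, simply asserting that the proof of Proposition~\ref{prop:RSS argument additive} carries over by swapping addition with multiplication (and sums with products/ratios), which is exactly the translation you carry out, together with the sensible extra remark about deleting $0$ so the regularisation lemmas apply in the multiplicative group $\F_p^*$.
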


\section{Proof of Main Theorem}
We prove only the most-studied version of Theorem~\ref{thm:main} of sums and products; the other variants are deduced in an almost identical manner. 

\subsection{Refinement}\label{sec:refinement}
We begin with four applications of Lemma~\ref{lem:CandB} (albeit within Lemmas~\ref{lem:E4+E2*}, \ref{lem:E4*E2+}, \ref{lem:E4*E4+} and \ref{lem:E4+E4*}).

Firstly, from Lemma~\ref{lem:E4+E2*} applied to the set $A$ we obtain sets $A_2\subseteq A_1 \subseteq A$ so that 
\[
\E_4(A_1)\E^\times(A_2, U)^2\lesssim |A|^7 |U|^3~~~~\text{for any } U\,.
\]

Secondly, we apply Lemma~\ref{lem:E4*E2+} to the set $A_2$ to get $A_4\subseteq A_3\subseteq A_2$ with 
\[
\E_4^\times(A_3)\E(A_4, U)^2\lesssim|A|^7 |U|^3~~~~\text{for any } U\,.
\]

We now continue refining our set in order to take advantage of  Lemmas~\ref{lem:E4*E4+} and \ref{lem:E4+E4*}.
Let us apply  Lemma~\ref{lem:E4+E4*} to the set $A_4$ to obtain $A_6\subseteq A_5 \subseteq A_4$ so that for any set $U$ we have
\[
\E_4(A_5)\E_4^\times(A_6,U)\lesssim |A|^7 |U|^2.
\]
Finally, we apply Lemma~\ref{lem:E4*E4+} to the set $A_6$ to obtain 
$A_8\subseteq A_7 \subseteq A_6$ so that for any set $U$ we have
\[
\E^\times_4(A_7) \E_4(A_8,U)\lesssim |A|^7 |U|^2.
\]
Note that in each refinement stage we retain a positive proportion of the set, so that $|A_8|\gtrsim |A|$.

\subsection{The calculation}
We now apply Propositions~\ref{prop:RSS argument additive} and \ref{prop:RSS argument multiplicative} to the set $A_8$. We multiply the ensuing bounds. 

To summarise, we obtain subsets $ B_1\subseteq A_8$ and $ B_2\subseteq A_8$ with $|B_1|,|B_2,|\gtrsim |A|$  so that
\[
\E_{4/3}(B_1)^3~\E_{4/3}^{\times}(B_2)^3\lesssim \E_4(A_8)^2~\E_4^\times(A_8)^2 \frac{|A+A|^{8}|AA|^{8}}{|A|^{48}} \E_4(A_8,\mathcal{E}_1)\mu_1^4 \nu_1^4 \E^{\times}_4(A_8,\mathcal{E}_2) \mu_2^4\nu_2^4 
\]
where
\begin{enumerate}[label=(\roman*)]
    \item $\mathcal{F}_1 \subseteq B_1 - B_1$ and $\mathcal{F}_2\subseteq B_2 - B_2$.
    \item $\E_{4/3}(B_1)\approx |\mathcal{F}_1|\nu_2^{4/3}$ and 
    $\E_{4/3}^\times(B_2)\approx |\mathcal{F}_2|\nu_2^{4/3}$.
    \item $\mathcal{E}_1 \subseteq A_8- \mathcal{F}_1$ and $\mathcal{E}_2 \subseteq A_8 / \mathcal{F}_2$
    \item $\E(A_8,\mathcal{F}_1)\approx |\mathcal{E}_1|\mu_1^2$ and 
$\E^\times(A_8,\mathcal{F}_2)\approx |\mathcal{E}_2|\mu_2^2$.
\end{enumerate}

In the subsequent analysis, we will make ample use of bounds of the type $\E_4(A_8,U) \leq \E_4(A_7,U)$ etc to enable us to take advantage of Section~\ref{sec:refinement}. 

From Lemmas~\ref{lem:E4*E4+} and \ref{lem:E4+E4*}, we have
\begin{align*}
\E_{4/3}(B_1)^3\E_{4/3}^{\times}(B_2)^3&\lesssim \E_4(A_8)\E_4^\times(A_8)\frac{|A+A|^{8}|AA|^{8}}{|A|^{34}} |\mathcal{E}_1|^2\mu_1^4 \nu_1^4 | \mathcal{E}_2|^2\mu_2^4 \nu_2^4 \\
&\approx \E_4(A_8)\E_4^\times(A_8)\frac{|A+A|^{8}|AA|^{8}}{|A|^{34}} \E(A_8, \mathcal{F}_1)^2\E^{\times}(A_8, \mathcal{F}_2)^2\nu_1^4\nu_2^4\,.
\end{align*}
Recalling $\E_{4/3}(B_1) \approx |\mathcal{F}_1|\nu_1^{4/3}$ and $\E_{4/3}^\times(B_2) \approx |\mathcal{F}_2|\nu_2^{4/3}$, we use Lemmas~\ref{lem:E4+E2*} and \ref{lem:E4*E2+} to get
\begin{align*}
\E_{4/3}(B_1)^3\E_{4/3}^{\times}(B_2)^3 &\ll \frac{|A+A|^{8}|AA|^{8}}{|A|^{20}}|\mathcal{F}_1|^3\nu_1^4|\mathcal{F}_2|^3\nu_2^4\\
&\ll \frac{|A+A|^{8}|AA|^{8}}{|A|^{20}}\E_{4/3}(B_1)^3\E_{4/3}^{\times}(B_2)^3,
\end{align*}
which gives the required inequality.

\subsection{Justification of the $p$-constraint}
It remains to justify our use of Lemmas \ref{lem:E4+E2*}, \ref{lem:E4*E2+}, \ref{lem:E4*E4+} and \ref{lem:E4+E4*}.  In particular, through each application we collect the following constraints: 
\begin{enumerate}[label=(\roman*)]
\item \label{one} $|\mathcal{E}_1|^2|A||A/A||A-\mathcal{E}_1|\ll p^4$ because we apply bounds on $\E_4^\times(A_8)\E(A_8,\mathcal{E}_1)$. 
\item \label{two} $|\mathcal{E}_2|^2|A||A-A||A/\mathcal{E}_2|\ll p^4$ because we apply bounds on $\E_4(A_8)\E^\times(A_8,\mathcal{E}_2)^2$. 
\item \label{three}$|\mathcal{F}_1||A||A-A|\ll p^2 $ because we apply bounds on 
$\E_4^\times(A_8) \E(A_8,\mathcal{F}_1)^2$.
\item \label{four} $|\mathcal{F}_2||A||A/A|\ll p^2 $ because we apply bounds on $\E_4(A_8)\E_2^\times(A_8,\mathcal{F}_2)^2$
\end{enumerate}

We repeatedly use Lemma~\ref{lem:PRI} to justify each of the bounds. We note the symmetry of addition and multiplication appearing in \ref{two} and \ref{four}, and thus only
illustrate how to justify the constraints from \ref{one} and \ref{three}. 

By Lemma~\ref{lem:PRI}, we have $|\mathcal{E}_1|\leq |A+A-A|\leq |A+A|^{3}|A|^{-2}$ and $|A-\mathcal{E}_1|\leq |A+A-A-A|\leq |A+A|^{4}|A|^{-3}$.

Hence the constraint $|\mathcal{E}_1|^2|A||A/A||A-\mathcal{E}_1|\ll p^4$ is satisfied if
$|A+A|^{10}|AA|^2|A|^{-7}\ll p^4$. Suppose that this does not hold. Then, since $p>|A|^2$, we have
$|A+A|^{10}|AA|^2\gg |A|^{15}$, and so we are done. 

Let us now consider the constraint in \ref{three}. Using Lemma~\ref{lem:PRI}, we see that $|\mathcal{F}_2||A||A/A|\ll p^2 $ is satisfied if
$|A+A|^2 |AA|^2 \ll |A| p^2 $. If this is not the case, then $|A+A|^2|AA|^2 \gg |A|^4$, as required.

\section*{Acknowledgements}
The second author was supported by the Austrian Science Fund FWF grants P 30405 and P 34180. We thank Audie Warren for his helpful comments.

\end{document}